\documentclass{amsart}
\usepackage{mathrsfs}
\usepackage{mathrsfs}
\usepackage{amsfonts}
\usepackage{cases}
\usepackage{latexsym}
\usepackage{amsmath}
\usepackage[arrow,matrix]{xy}
\usepackage{stmaryrd}
\usepackage{amsfonts}
\usepackage{amsmath,amssymb,amscd,bbm,amsthm,mathrsfs,dsfont}

\usepackage{fancyhdr}
\usepackage{amsxtra,ifthen}
\usepackage{verbatim}

\numberwithin{equation}{section}

\theoremstyle{plain}
\newtheorem{theorem}{Theorem}[section]
\newtheorem{lemma}[theorem]{Lemma}

\theoremstyle{definition}

\begin{document}

\title[Nonlinear Lie-type Derivations of Von Neumann Algebras]
{Nonlinear Lie-type Derivations of Von Neumann Algebras}

\author{Zhankui Xiao, Zengqiang Lin and Feng Wei}

\address{Xiao: School of Mathematical Sciences, Huaqiao University,
Quanzhou, Fujian, 362021, P. R. China}

\email{zhkxiao@gmail.com}

\address{Lin: School of Mathematical Sciences, Huaqiao University,
Quanzhou, Fujian, 362021, P. R. China}

\email{lzq134@163.com}

\address{Wei: School of Mathematics, Beijing Institute of Technology,
Beijing, 100081, P. R. China}

\email{daoshuo@hotmail.com}

\begin{abstract}
Let $A$ be a von Neumann algebra with no central summands of type
$I_1$. We will show that every nonlinear Lie $n$-derivation on $A$
is of the standard form, i.e. it can be expressed as a sum of an
additive derivation and a central-valued mapping which annihilates
each $(n-1)$-th commutator of $A$.
\end{abstract}

\subjclass[2000]{47B47; 46L57}

\keywords{Lie $n$-derivation; von Neumann algebra}


\maketitle

\section{Introduction}\label{xxsec1}

Let $\mathcal{R}$ be a commutative ring with identity and
$\mathcal{A}$ be a unital associative algebra over $\mathcal{R}$. A
mapping (without the additivity or $\mathcal{R}$-linearity
assumption) $\varphi: \mathcal{A}\longrightarrow \mathcal{A}$ is
called a \textit{nonlinear Lie derivation} if
$$
\varphi([x, y])=[\varphi(x), y]+[x, \varphi(y)]
$$
for all $x,y\in \mathcal{A}$, a \textit{nonlinear Lie triple
derivation} if
$$
\varphi([[x, y], z])=[[\varphi(x), y], z]+[[x, \varphi(y)], z]+[[x,
y], \varphi(z)]
$$
for all $x,y,z\in \mathcal{A}$. Obviously, every nonlinear Lie
derivation is a nonlinear Lie triple derivation. But the converse
statement is not true in general. For instance, suppose that $d:
\mathcal{A}\longrightarrow \mathcal{A}$ is an additive derivation
and $f: \mathcal{A}\longrightarrow \mathcal{Z_A}$ is a mapping from
$\mathcal{A}$ into its center $\mathcal{Z_A}$ such that $f([[x, y],
z])=0$ for all $x, y, z\in \mathcal{A}$, then the mapping
$\varphi=d+f$ is a nonlinear Lie triple derivation of $\mathcal{A}$
and is not a nonlinear Lie derivation of $\mathcal{A}$. Taking into
account the previous two definitions of nonlinear Lie-type mappings,
we can extend them in one much more general way (see \cite{Abdullaev}). Suppose that $n\geq
2$ is a fixed positive integer. Let us see a sequence of polynomials
$$
\begin{aligned}
p_1(x_1)&=x_1\\
p_2(x_1,x_2)&=[p_1(x_1),x_2]=[x_1,x_2]\\
p_3(x_1,x_2,x_3)&=[p_2(x_1,x_2),x_3]=[[x_1,x_2],x_3]\\
p_4(x_1,x_2,x_3,x_4)&=[p_3(x_1,x_2,x_3),x_4]=[[[x_1,x_2],x_3],x_4]\\
\cdots\cdots\cdots &\cdots\cdots\cdots\cdots\\
p_n(x_1,x_2,\cdots,x_n)&=[p_{n-1}(x_1,x_2,\cdots,x_{n-1}),x_n].
\end{aligned}
$$
The polynomial $p_n(x_1,x_2,\cdots,x_n)$ is said to be an
$(n-1)$-\textit{th commutator} ($n\geq 2$). A mapping $\varphi:
A\longrightarrow A$ is called a \textit{nonlinear Lie
$n$-derivation} if
$$
\varphi(p_n(x_1,x_2,\cdots,x_n))=\sum_{i=1}^n
p_n(x_1,\cdots,x_{i-1}, \varphi(x_i),x_{i+1},\cdots,x_n)
$$
for all $x_1,x_2,\cdots,x_n\in \mathcal{A}$, an
\textit{$\mathcal{R}$-linear Lie $n$-derivation} if the mapping
$\varphi$ is $\mathcal{R}$-linear. Every (non-)linear Lie derivation
is a (non-)linear Lie $2$-derivation and every (non-)linear Lie
triple derivation is a (non-)linear Lie $3$-derivation. Furthermore,
if $d: \mathcal{A}\longrightarrow \mathcal{A}$ is an additive
derivation and that $f: \mathcal{A}\longrightarrow \mathcal{Z_A}$ is
a mapping from $\mathcal{A}$ into its center $\mathcal{Z_A}$ such
that $f(p_n(x_1,x_2,\cdots,x_n))=0$ for all $x_1, x_2, \cdots,
x_n\in \mathcal{A}$ ($n\geq 2$). Then the mapping
$$
\varphi=d+f \eqno(\clubsuit)
$$
is a nonlinear Lie $n$-derivation of $\mathcal{A}$ ($n\geq 2$). But
it is not a (nonlinear) derivation of $\mathcal{A}$ in the case where
$f$ can not annihilate $\mathcal{A}$
($n\geq 2$). We shall say that a nonlinear Lie $n$-derivation
$\varphi$ of $\mathcal{A}$ is \textit{standard} if it can be
expressed as the preceding form $(\clubsuit)$ ($n\geq 2$).

In recent years, there has been an increasing interest in
investigating whether (non-)linear Lie-type derivations on
$C^*$-algebras, and on more general operator algebras are of the
standard form $(\clubsuit)$. Many authors have made essential
contributions to related topics, see all literature references
[1-23]. Miers initiated the study of linear Lie-type derivations of
von Neumann algebras in \cite{Miers2, Miers3}. He \cite{Miers2}
proved that every linear Lie derivation on a von Neumann algebra
$\mathcal{A}$ is of the standard form $(\clubsuit)$. Furthermore, he
extended this result to the case of Lie triple derivations and
showed that if $\mathcal{A}$ is a von Neumann algebra with no
central summands of type $I_1$, then every Lie triple derivation has
the standard form $(\clubsuit)$ \cite{Miers3}. 
Using Johnson's original ideas dealing with continuous Lie derivations
from \cite{Johnson} and the theory of functional identities, Alaminos et al.
extended Miers' result \cite{Miers2} to Lie derivations from von Neumann
algebras into Banach bimodules \cite{AlaminosBresarVillena}.
Mathieu and Villena
\cite{MathieuVillena} proved that every linear Lie derivation on
$C^*$-algebras has the standard form $(\clubsuit)$. Lu and his
students systematically studied (non-)linear Lie-type derivations of
various operator algebras in their elegant works \cite{Lu1, Lu2,
Lu3, LuLiu1, LuLiu2}. The involved operator algebras include the
algebras of bounded linear operators, CSL algebras,
$\mathcal{J}$-subspace lattice algebras, nest algebras, reflexive
algebras. Roughly speaking, every (non-)linear Lie derivation or Lie
triple derivation on these operator algebras has the standard form
$(\clubsuit)$.

After Bre\v{s}ar's landmark paper \cite{Bresar}, there are more and more works
dealing with Lie-type mappings form the algebraical point of view (see \cite{BaiDu, BresarMiers, Lu2, LuLiu2}).
Cheung \cite{Cheung} gave sufficient conditions which enables every linear Lie
derivation on a triangular algebra to be standard. Yu and Zhang then extended Cheung's
result to nonlinear Lie derivations \cite{YuZhang}. In \cite{XiaoWei2} the first and third author
obtained that every nonlinear Lie triple derivation of full matrix algebra $M_n(\mathcal{R})$
is of the standard form provided that $\mathcal{R}$ is $2$-torsion free.
The present paper mostly motivated by Miers' work \cite{Miers3} and the work of
Bai and Du \cite{BaiDu}. It is of independent interesting to point out that
the methods of this paper can be similarly used to nonlinear Lie $n$-derivations
of the algebras $B(X)$ of bounded linear operators (see \cite{LuLiu2}).

\section{Preliminaries}\label{xxsec2}

From now on until the end of this paper, we always assume that
$\mathcal{A}$ is a von Neumann algebra with no central summands of
type $I_1$ (i.e. with no abelian summands). We denote
$\mathcal{Z_A}$ the center of $\mathcal{A}$. If
$A=A^*\in\mathcal{A}$, the central core of $A$, denoted by
$\underline{A}$, is defined to be ${\rm sup}\{S\in\mathcal{Z_A}|
S=S^*\leqslant A\}$. Clearly, the central core of a projection $P$
is the largest central projection contained in $P$. For $A\in
\mathcal{A}$, the central carrier of $A$, denoted by $\overline{A}$,
is the intersection of all central projections $P$ such that $PA=A$.

Let $P$ and $Q$ be nonzero orthogonal projection in $\mathcal{A}$
with $P+Q=I$, $\overline{P}=\overline{Q}=I$ and
$\underline{P}=\underline{Q}=0$, (see \cite{Miers1} and the last
paragraph of P.58 in \cite{Miers3}). Let
$\mathcal{A}_{11}=\{PXP|X\in\mathcal{A}\}$,
$\mathcal{A}_{12}=\{PXQ|X\in\mathcal{A}\}$,
$\mathcal{A}_{21}=\{QXP|X\in\mathcal{A}\}$,
$\mathcal{A}_{22}=\{QXQ|X\in\mathcal{A}\}$. Then we may write
$\mathcal{A}=\mathcal{A}_{11}+\mathcal{A}_{12}+\mathcal{A}_{21}+\mathcal{A}_{22}$.
We collect several fundamental properties of von Neumann algebras in
the following lemma for convenience.

\begin{lemma}\label{xxsec2.1}
Let $\mathcal{A}$ be a von Neumann algebra with no central summands of type $I_1$.
\begin{enumerate}
\item[(1)] \cite[Lemma 1]{Miers3} If $A\in \mathcal{A}_{ij}$ and $AX=0$ for all
$X\in \mathcal{A}_{jk}$ with $1\leq i,j,k\leq 2$, then $A=0$.

\item[(2)] \cite[Lemma 5]{Miers1} If $A\in \mathcal{A}$ commutes with all
$X_{12}\in \mathcal{A}_{12}$ and all $X_{21}\in \mathcal{A}_{21}$,
then $A$ commutes with all $X_{11}\in \mathcal{A}_{11}$ and all
$X_{22}\in \mathcal{A}_{22}$, and hence $A\in\mathcal{Z_A}$.

\item[(3)] \cite[Lemma 14]{Miers1} $\mathcal{A}_{ii}\cap
\mathcal{Z_A}=\{0\}$, where $i=1,2$.

\item[(4)] \cite[Lemma 5]{BresarMiers} If $C\in \mathcal{Z_A}$ such that
$C\mathcal{A}\subseteq \mathcal{Z_A}$, then $C=0$.
\end{enumerate}
\end{lemma}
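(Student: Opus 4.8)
The four assertions are all standard structural facts about the Peirce decomposition $\mathcal{A}=\mathcal{A}_{11}+\mathcal{A}_{12}+\mathcal{A}_{21}+\mathcal{A}_{22}$ attached to the pair of projections $P,Q$, and the plan is to derive each of them directly from the $C^{\ast}$-algebraic structure together with the defining conditions $\overline{P}=\overline{Q}=I$, $\underline{P}=\underline{Q}=0$ and the absence of abelian central summands. I would treat them in the order (1), (3), (2), (4), since the faithfulness statement (1) is the tool that trims the off-diagonal part in (2), while (4) is the only place where the hypothesis on the type genuinely enters.

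For (1), writing $P_1=P$ and $P_2=Q$, I would observe that $\mathcal{A}_{ji}=\mathcal{A}_{ij}^{\ast}$, so that $A^{\ast}\in\mathcal{A}_{ji}$ is itself an admissible choice of $X$ (take $k=i$). The hypothesis then gives $AA^{\ast}=0$, and positivity in the $C^{\ast}$-algebra forces $A=0$. For (3), let $Z\in\mathcal{A}_{ii}\cap\mathcal{Z_A}$; splitting into real and imaginary parts I may assume $Z=Z^{\ast}$, so $Z=P_iZP_i$ with $Z$ central. Every spectral projection of $Z$ associated with a nonzero part of its spectrum is then a central projection dominated by $P_i$; but $\underline{P_i}=0$ says that no nonzero self-adjoint central element lies below $P_i$, so all these spectral projections vanish and $Z=0$.

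For (2), I would expand $A=\sum_{k,l}A_{kl}$ and match Peirce components in $[A,X_{12}]=0$ and $[A,X_{21}]=0$ for arbitrary $X_{12}\in\mathcal{A}_{12}$ and $X_{21}\in\mathcal{A}_{21}$. The $\mathcal{A}_{22}$- and $\mathcal{A}_{11}$-components of $[A,X_{12}]=0$ read $A_{21}X_{12}=0$ and $X_{12}A_{21}=0$, whence $A_{21}=0$ by (1); symmetrically $A_{12}=0$. The surviving relation $A_{11}X_{12}=X_{12}A_{22}$, together with its partner coming from $X_{21}$, shows that the diagonal part commutes with every off-diagonal element, and feeding this back through (1) forces $A$ to commute with the diagonal blocks as well, so $A\in\mathcal{Z_A}$.

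The genuine content, and the step I expect to be the main obstacle, is (4), because it is the only assertion for which the no-type-$I_1$ hypothesis is indispensable. Here $C\mathcal{A}\subseteq\mathcal{Z_A}$ is equivalent to $C[a,b]=0$ for all $a,b\in\mathcal{A}$, since $Ca$ being central means $Cab=bCa=Cba$. Let $E=\overline{C}$ be the central support projection of $C$; passing to the reduced algebra $E\mathcal{A}$, on which $C$ has full support, the identity $C[Ea,Eb]=0$ upgrades, via the spectral calculus of $|C|$, to $[Ea,Eb]=0$ for all $a,b$, i.e.\ $E\mathcal{A}$ is abelian. An abelian von Neumann algebra is exactly a central summand of type $I_1$, which is excluded unless $E=0$; hence $C=0$. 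The delicate point is precisely the passage from ``$C$ annihilates every commutator'' to ``the summand $E\mathcal{A}$ is commutative,'' which rests on the support-projection calculus and on the identification of abelian summands with type $I_1$.
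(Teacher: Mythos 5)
Your arguments for (3) and (4) are sound: in (3), the range projection of a self-adjoint central $Z=P_iZP_i$ is a central projection dominated by $P_i$, hence by $\underline{P_i}=0$; in (4), the passage from $C[a,b]=0$ to $E[a,b]=0$, where $E=\overline{C}$ is the central support (obtained via functional calculus of $|C|$), exhibits $E\mathcal{A}$ as an abelian central summand, forcing $E=0$ and $C=CE=0$. Note that the paper gives no proof of this lemma at all --- all four parts are quoted from Miers and Bre\v{s}ar--Miers --- so a self-contained proof is in principle a legitimate different route.

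However, your proof of (1) has a genuine gap, and it infects (2). In part (1) the triple $(i,j,k)$ is \emph{fixed}: the hypothesis is that $A$ annihilates the single Peirce space $\mathcal{A}_{jk}$, not all spaces $\mathcal{A}_{jl}$ simultaneously; this is how Miers states his Lemma 1 and, decisively, how the present paper uses it. Your choice $X=A^{\ast}$ is admissible only when $k=i$. The case your argument cannot reach is $i=j$, $k\neq i$: an element $A\in\mathcal{A}_{ii}$ with $A\mathcal{A}_{ik}=0$, where $A^{\ast}\in\mathcal{A}_{ii}$ is not an allowed $X$ and positivity yields nothing. This is exactly the case invoked in Step 2 of the paper's proof of Theorem 2.3, where from $(d(X_{ii}Y_{ii})-d(X_{ii})Y_{ii}-X_{ii}d(Y_{ii}))M_{ij}=0$ for all $M_{ij}\in\mathcal{A}_{ij}$ one concludes that the parenthesized element of $\mathcal{A}_{ii}$ vanishes. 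It is also what your own proof of (2) needs at its final step: from $A_{11}X_{12}=X_{12}A_{22}$ you get $[A_{11},Y_{11}]X_{12}=0$ for all $X_{12}\in\mathcal{A}_{12}$, and concluding $[A_{11},Y_{11}]=0$ is precisely (1) applied to $[A_{11},Y_{11}]\in\mathcal{A}_{11}$ killing $\mathcal{A}_{12}$ --- the unproven case. So ``feeding this back through (1)'' rests on a case your proof of (1) does not cover.

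The missing ingredient is the standing hypothesis $\overline{P}=\overline{Q}=I$, which you never use anywhere --- a warning sign, since it is imposed on $P,Q$ exactly for this lemma. The repair for the missing case (say $A\in\mathcal{A}_{11}$ with $A(PXQ)=0$ for all $X$): since $A=AP$, we get $AXQ=A(PXQ)=0$ for all $X\in\mathcal{A}$, so $A$ annihilates $\mathrm{span}\,\mathcal{A}Q\mathcal{A}$ and hence, by separate $\sigma$-weak continuity of multiplication, the $\sigma$-weakly closed two-sided ideal generated by $Q$; that ideal is $\overline{Q}\mathcal{A}=\mathcal{A}$, so $A=A\cdot I=0$. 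Without the carrier condition the statement is simply false: take $\mathcal{A}=M_2(\mathbb{C})\oplus M_2(\mathbb{C})$, $P=e_{11}\oplus I$, $Q=e_{22}\oplus 0$, $A=0\oplus I\in\mathcal{A}_{11}$; then $A\mathcal{A}_{12}=0$ but $A\neq 0$. Once this case is supplied, the remaining triples are immediate ($k=j$ is trivial because $P_j\in\mathcal{A}_{jj}$, and $k=i\neq j$ is your positivity argument), and your proof of (2) then closes correctly.
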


We now recall a result of general Banach algebras called the
Kleinecke-Shirokov theorem \cite{Kleinecke, Shirokov}. If $d$ is a continuous derivation on a
Banach algebra $\mathcal{B}$ and $a\in\mathcal{B}$ is such that
$d^2(a)=0$, then $d(a)$ is quasi-nilpotent (see also
\cite{MathieuMurphy}, for example). The following version of the
Kleinecke-Shirokov theorem is more directly for our current use.

\begin{lemma}\label{xxsec2.2}
Let $\mathcal{B}$ be a Banach algebra. If $a,b\in\mathcal{B}$ such
that $[[a,b],b]=0$, then $[a,b]$ is quasi-nilpotent.
\end{lemma}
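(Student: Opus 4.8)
The plan is to read this as the Kleinecke--Shirokov theorem specialized to the inner derivation induced by $b$, and to carry out the spectral estimate directly. First I would introduce $\delta\colon\mathcal{B}\to\mathcal{B}$, $\delta(x)=[x,b]$, which is a bounded derivation on the Banach algebra $\mathcal{B}$ with $\|\delta\|\leq 2\|b\|$. Writing $c=\delta(a)=[a,b]$, the hypothesis $[[a,b],b]=0$ becomes $\delta^{2}(a)=\delta(c)=0$; in particular $c$ lies in the kernel of $\delta$, so $\delta(c^{k})=0$ for every $k\geq 1$. The goal is to show that the spectral radius $r(c)=\lim_{n\to\infty}\|c^{n}\|^{1/n}$ vanishes, which is precisely the assertion that $c=[a,b]$ is quasi-nilpotent.

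The heart of the argument is the combinatorial identity $\delta^{n}(a^{n})=n!\,c^{n}$ for all $n\geq 1$. To obtain it I would first record the auxiliary vanishing $\delta^{m}(a^{k})=0$ whenever $m>k\geq 0$. This follows by induction on $k$ from the Leibniz rule $\delta^{m}(xy)=\sum_{j}\binom{m}{j}\delta^{j}(x)\delta^{m-j}(y)$: since $\delta^{j}(a)=0$ for $j\geq 2$, expanding $\delta^{m}(a^{k})=\delta^{m}(a\cdot a^{k-1})$ leaves only the $j=0$ and $j=1$ terms, namely $a\,\delta^{m}(a^{k-1})+m\,c\,\delta^{m-1}(a^{k-1})$, both of which vanish by the inductive hypothesis when $m>k$. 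The same expansion with $m=n$, $k=n$ then gives $\delta^{n}(a^{n})=a\,\delta^{n}(a^{n-1})+n\,c\,\delta^{n-1}(a^{n-1})$; the first term is $0$ by the auxiliary fact (here $n>n-1$), and the second equals $n\,c\cdot(n-1)!\,c^{n-1}$ by induction on $n$, yielding $\delta^{n}(a^{n})=n!\,c^{n}$.

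With the identity in hand, the conclusion is a one-line spectral-radius estimate: $n!\,\|c^{n}\|=\|\delta^{n}(a^{n})\|\leq\|\delta\|^{n}\|a^{n}\|\leq(\|\delta\|\,\|a\|)^{n}$, so that $\|c^{n}\|^{1/n}\leq \|\delta\|\,\|a\|/(n!)^{1/n}$. Since $(n!)^{1/n}\to\infty$, the right-hand side tends to $0$, whence $r(c)=0$ and $c=[a,b]$ is quasi-nilpotent.

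I expect the only genuine obstacle to be the clean derivation of the identity $\delta^{n}(a^{n})=n!\,c^{n}$, since the elements $a$ and $c$ need not commute and a naive Leibniz expansion proliferates terms. The auxiliary vanishing $\delta^{m}(a^{k})=0$ for $m>k$ is precisely the device that circumvents this, collapsing the expansion to two controllable terms at each step. Everything after that is a routine norm bound, exploiting only that $\delta$ is bounded and that the factorial outpaces the geometric growth $(\|\delta\|\,\|a\|)^{n}$.
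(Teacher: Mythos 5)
Your proof is correct and matches the paper's route exactly: the paper offers no proof of this lemma, instead invoking the classical Kleinecke--Shirokov theorem for the continuous inner derivation $\delta(x)=[x,b]$, noting that the hypothesis says $\delta^{2}(a)=[[a,b],b]=0$. Your argument is precisely the standard Kleinecke proof of that cited theorem --- the auxiliary vanishing $\delta^{m}(a^{k})=0$ for $m>k$, the identity $\delta^{n}(a^{n})=n!\,[a,b]^{n}$, and the spectral radius estimate $\|[a,b]^{n}\|^{1/n}\leq\|\delta\|\,\|a\|/(n!)^{1/n}\to 0$ --- so it is the same approach, merely written out in full rather than cited.
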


The main result of this paper is

\begin{theorem}\label{xxsec2.3}
Let $\mathcal{A}$ be a von Neumann algebra with no central summands
of type $I_1$. Let $\varphi: \mathcal{A}\longrightarrow\mathcal{A}$
be a nonlinear Lie $n$-derivation. Then $\varphi$ is of the form
$d+f$, where $d$ is an additive derivation of $\mathcal{A}$ and $f$
is a central-valued mapping which annihilates each $(n-1)$-th
commutator of $A$.
\end{theorem}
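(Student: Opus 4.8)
The plan is to run the standard Peirce-decomposition argument relative to the projections $P,Q$ fixed in Section~\ref{xxsec2}, reducing $\varphi$ step by step to an additive derivation plus a central residue, and to call on the Kleinecke--Shirokov theorem (Lemma~\ref{xxsec2.2}) precisely where an ambiguous central term must be forced to vanish. Throughout I would exploit that $p_n(x_1,\dots,x_n)$ is an iterated commutator: substituting the central-carrier-full, central-core-zero projections $P$ and $Q$ for most of the arguments collapses the bulk of the defining sum, so that a single evaluation isolates one Peirce component of $\varphi$ at a time. The faithfulness statement Lemma~\ref{xxsec2.1}(1) and the centrality criteria (2)--(4) would be the workhorses for discarding the components that are supposed to be zero.

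First I would compute $\varphi(P)$ by feeding the identity arguments drawn from $\{P,Q\}$ together with one off-diagonal element, and read off the off-diagonal part of $\varphi(P)$. Setting $T=P\varphi(P)Q-Q\varphi(P)P$ and passing from $\varphi$ to $\varphi-\mathrm{ad}_T$ (still a nonlinear Lie $n$-derivation, since inner derivations are of the form $(\clubsuit)$) I may assume $\varphi(P),\varphi(Q)\in\mathcal{A}_{11}+\mathcal{A}_{22}$. Next I would establish Peirce compatibility: evaluating the identity on arguments such as $p_n(P,X_{12},Q,\dots,Q)$ and applying Lemma~\ref{xxsec2.1}(1) shows $\varphi(\mathcal{A}_{12})\subseteq\mathcal{A}_{12}$ and $\varphi(\mathcal{A}_{21})\subseteq\mathcal{A}_{21}$, while each diagonal block is carried into itself modulo $\mathcal{Z_A}$.

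The technical heart is additivity. I would first prove $\varphi$ additive on each off-diagonal block directly from the identity, then transport this to the diagonal blocks through the products $\mathcal{A}_{12}\mathcal{A}_{21}\subseteq\mathcal{A}_{11}$ and $\mathcal{A}_{21}\mathcal{A}_{12}\subseteq\mathcal{A}_{22}$, and finally assemble global additivity $\varphi(x+y)=\varphi(x)+\varphi(y)$ by decomposing $x,y$ into Peirce components and controlling the cross terms with further substitutions. With $\varphi$ now additive, I would define $f(x)$ to be the central component appearing in this block decomposition and put $d=\varphi-f$; verifying the Leibniz rule for $d$ on each pair of blocks turns $\varphi$ into the desired derivation, and $f(p_n(x_1,\dots,x_n))=0$ follows because $d$ already accounts for the entire commutator action.

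The step I expect to resist is the diagonal analysis in the last two phases: since the diagonal blocks only enter $p_n$ through commutators, their additive and multiplicative behaviour is not directly visible and must be excavated by engineered choices of the $x_i$. At the junctures where a leftover element is simultaneously known to be central (or self-adjoint) and to satisfy a relation of the shape $[[a,b],b]=0$, Lemma~\ref{xxsec2.2} forces it to be quasi-nilpotent, and then Lemma~\ref{xxsec2.1}(3)--(4) forces it to be $0$; this is the one genuinely analytic ingredient and the place where the hypothesis ``von Neumann algebra'' rather than ``abstract ring'' is essential.
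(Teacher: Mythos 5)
Your skeleton (subtract an inner derivation to normalize $\varphi(P)$, prove Peirce compatibility, analyze additivity, extract $d$ and a central remainder $f$, then verify the Leibniz rule and $f(p_n)=0$) is the same as the paper's. But there is a genuine gap at the heart of your third phase: you propose to \emph{prove global additivity} $\varphi(x+y)=\varphi(x)+\varphi(y)$, and to transport additivity from the off-diagonal blocks to the diagonal ones. This is impossible, because $\varphi$ is simply not additive in general: any map of the form $\varphi=d+f$, where $f$ is a central-valued, wildly nonadditive map vanishing on all $(n-1)$-th commutators, is itself a nonlinear Lie $n$-derivation (this is exactly the form $(\clubsuit)$ the theorem aims for, and such nonadditive $f$ exist, e.g.\ nonlinear functions of a central trace that vanish on trace-zero elements). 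Consequently no argument can establish exact additivity of $\varphi$ beyond the off-diagonal blocks; the most one can prove is \emph{almost additivity}, $\varphi(x+y)-\varphi(x)-\varphi(y)\in\mathcal{Z}_{\mathcal{A}}$, which is precisely what the paper does (Lemma~\ref{xxsec3.13}). This also means the order of your final phase is backwards: you cannot wait until ``$\varphi$ is additive'' to split off $f$. Instead one must first define $d$ by projecting $\varphi(X_{ij})$ onto the Peirce blocks and discarding the central residue --- a definition that is well posed only because $\mathcal{A}_{ii}\cap\mathcal{Z}_{\mathcal{A}}=\{0\}$ (Lemma~\ref{xxsec2.1}(3)) makes the splitting $\varphi(X_{ii})=Y_{ii}+Z$ unique --- then prove that this $d$ (not $\varphi$) is additive, and only then set $f=\varphi-d$.

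Two further points. First, the Leibniz verification is not purely ``block-by-block'' as you suggest: for the mixed products $X_{12}Y_{21}$ and $X_{21}Y_{12}$ one only obtains $d(XY)-d(X)Y-Xd(Y)\in P\mathcal{Z}_{\mathcal{A}}\oplus Q\mathcal{Z}_{\mathcal{A}}$, and eliminating this obstruction requires an extra idea: the paper shows the defect $\theta(X,Y)=d(XY)-d(X)Y-Xd(Y)$ is a bi-additive Hochschild $2$-cocycle and then kills it with Lemma~\ref{xxsec2.1}(4). Second, a smaller inaccuracy: after subtracting $\mathrm{ad}_T$ you should conclude $\varphi(P)\in\mathcal{Z}_{\mathcal{A}}$ (via Lemma~\ref{xxsec2.1}(2)), not merely $\varphi(P)\in\mathcal{A}_{11}+\mathcal{A}_{22}$; centrality, not diagonality, is what makes the terms involving $\varphi(P)$ drop out of the iterated commutators in the Peirce-compatibility computations (Lemma~\ref{xxsec3.3}).
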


\section{Proof of the Main Result}\label{xxsec3}

In this section, we will prove the main result Theorem
\ref{xxsec2.3} by a series of lemmas. Let $\mathcal{A}$ be a von
Neumann algebra with no central summands of type $I_1$ and $\varphi$
be a nonlinear Lie $n$-derivation of $\mathcal{A}$. It is clear that
every Lie derivation is a Lie $n$-derivation for $n\geq 3$.
Therefore, without loss of generality we assume $n\geq 3$ for
convenience.

\begin{lemma}\label{xxsec3.1}
If $[X,Y]\in \mathcal{Z_A}$ for $X,Y\in \mathcal{A}$,
then $[\varphi(X),Y]+[X,\varphi(Y)]\in \mathcal{Z_A}$.
\end{lemma}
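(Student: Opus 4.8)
The plan is to substitute the tuple $(X,Y,x_3,\ldots,x_n)$, with $x_1=X$, $x_2=Y$ and $x_3,\ldots,x_n$ ranging freely over $\mathcal{A}$, into the defining identity of a nonlinear Lie $n$-derivation. Since $[X,Y]\in\mathcal{Z_A}$, the innermost bracket $p_2(X,Y)=[X,Y]$ is central, so $p_3(X,Y,x_3)=[[X,Y],x_3]=0$ and hence $p_n(X,Y,x_3,\ldots,x_n)=0$; thus the left-hand side collapses to $\varphi(0)$. On the right-hand side, every summand in which $\varphi$ is applied to one of $x_3,\ldots,x_n$ again has the central element $[X,Y]$ as its innermost bracket and therefore vanishes, so only the $i=1$ and $i=2$ terms survive. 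Writing $U=[\varphi(X),Y]+[X,\varphi(Y)]$ and using additivity of the bracket in its first slot, these two surviving terms combine into a single left-normed commutator, so that
$$
[[\cdots[U,x_3],x_4],\ldots,x_n]=\varphi(0)\qquad(x_3,\ldots,x_n\in\mathcal{A}).
$$

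Specializing $x_n=I$ makes the outermost bracket zero, whence $\varphi(0)=0$, and the relation becomes the identity
$$
[[\cdots[U,x_3],x_4],\ldots,x_n]=0\qquad(x_3,\ldots,x_n\in\mathcal{A}),
$$
an $(n-2)$-fold iterated commutator that vanishes for all choices of its arguments. The reduction so far is routine; the only real obstacle is to pass from this vanishing to the conclusion $U\in\mathcal{Z_A}$.

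I would resolve this by a short induction on the number $m=n-2$ of brackets, proving the auxiliary statement that if the $m$-fold left-normed commutator of an element $W$ with arbitrary arguments always vanishes, then $W\in\mathcal{Z_A}$. The case $m=1$ is immediate. For the inductive step, fixing $x_3$ and applying the case of $m-1$ brackets to $W=[U,x_3]$ shows that $[U,x_3]\in\mathcal{Z_A}$ for every $x_3$; so everything reduces to the key implication ``$[U,y]\in\mathcal{Z_A}$ for all $y$'' $\Longrightarrow$ ``$U\in\mathcal{Z_A}$''. For this I would test against a projection (equivalently an idempotent) $e$: the central element $c=[U,e]$ satisfies $ece=0$, and since $c$ is central this forces $ec=ce=0$ and hence $Ue=eUe=eU$, that is $[U,e]=0$. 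Thus $U$ commutes with every projection of $\mathcal{A}$, and because the projections span a norm-dense subspace of the von Neumann algebra while the bracket is continuous, $U$ commutes with all of $\mathcal{A}$, i.e. $U\in\mathcal{Z_A}$. (As an alternative to the projection computation, one could instead invoke Lemma \ref{xxsec2.1}(2), reducing the verification to the off-diagonal elements $X_{12}$ and $X_{21}$.)
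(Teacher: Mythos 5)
Your proof is correct, and while the reduction step coincides with the paper's, the decisive step is resolved by a genuinely different argument. Both proofs substitute $(X,Y,x_3,\ldots,x_n)$ into the defining identity, note that every term in which $\varphi$ falls on one of $x_3,\ldots,x_n$ vanishes because $[X,Y]$ is central, and arrive at the identity $[\cdots[U,x_3],\cdots,x_n]=0$ for $U=[\varphi(X),Y]+[X,\varphi(Y)]$ (the paper gets $\varphi(0)=0$ by plugging zeros into the defining identity, you by taking $x_n=I$; both are fine). From there the paper descends via the Kleinecke--Shirokov theorem (Lemma \ref{xxsec2.2}): at each level the inner iterated commutator is central because the identity holds for all outer arguments, is quasi-nilpotent by Kleinecke--Shirokov applied with a repeated last variable, and a central quasi-nilpotent element of a von Neumann algebra is zero since the center is a commutative $C^*$-algebra. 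You instead run an induction that isolates the purely algebraic implication that $[U,y]\in\mathcal{Z_A}$ for all $y$ forces $U\in\mathcal{Z_A}$, and you settle it by testing against idempotents: if $c=[U,e]$ is central and $e^2=e$, then $ece=0$, hence $ce=ec=0$, hence $eU=eUe=Ue$; combined with the norm-density of the linear span of projections in a von Neumann algebra this gives $U\in\mathcal{Z_A}$. Your route avoids quasi-nilpotency and spectral-radius considerations entirely, trading the Banach-algebra input for the spectral-theoretic abundance of projections; the paper's route stays inside the commutator calculus and reuses the same ``central quasi-nilpotent, hence zero'' device in several later lemmas (\ref{xxsec3.3}, \ref{xxsec3.5}, \ref{xxsec3.7}, \ref{xxsec3.10}, \ref{xxsec3.11}), which makes it the more economical choice in the context of the whole paper. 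One small caveat: your parenthetical alternative via Lemma \ref{xxsec2.1}(2) is more compressed than it looks, since one must show $[U,X_{12}]=0$ and $[U,X_{21}]=0$, not merely their centrality; it does work, because $P+X_{12}$ and $P+X_{21}$ are idempotents to which your computation applies (together with $[U,P]=0$), but that step deserves to be spelled out.
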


\begin{proof}
First, we note that
$$
\varphi(0)=\varphi(p_n(0,0,\cdots,0))=0.
$$
For any $X_{1}, \cdots, X_{n-2}\in\mathcal{A}$, we have
$p_n(X,Y,X_1,\cdots,X_{n-2})=0$ (note that we have assumed $n\geq 3$). Applying $\varphi$ to the identity, we get
$$
[\cdots[[\varphi(X),Y]+[X,\varphi(Y)],X_{1}],\cdots, X_{n-2}]=0.
$$
By Lemma \ref{xxsec2.2}, $[\cdots[[\varphi(X),Y]+[X,\varphi(Y)],X_{1}],\cdots, X_{n-3}]$
is quasi-nilpotent and therefore, being central, is zero. A direct recursive procedure shows
that $[[\varphi(X),Y]+[X,\varphi(Y)],X_{1}]=0$. That is
$[\varphi(X),Y]+[X,\varphi(Y)]\in \mathcal{Z_A}$.
\end{proof}

For later use we give out a equivalent definition of Lie $n$-derivation. Define a sequence of polynomials
recursively by letting
$$
\begin{aligned}
q_1(x_1)&=x_1\\
q_2(x_1,x_2)&=[x_2,q_1(x_1)]=[x_2,x_1]\\
\cdots &\cdots\cdots\\
q_n(x_1,x_2,\cdots,x_n)&=[x_n,q_{n-1}(x_1,x_2,\cdots,x_{n-1})].
\end{aligned}
$$
Then the definition of Lie $n$-derivation deduces that
$$
\varphi(q_n(x_1,x_2,\cdots,x_n))=\sum_{i=1}^n q_n(x_1,\cdots,x_{i-1},\varphi(x_i),x_{i+1},\cdots,x_n) \eqno(1)
$$
for all $x_1,x_2,\cdots,x_n\in \mathcal{A}$. On the other hand, a mapping of $\mathcal{A}$ satisfying $(1)$
is also a (nonlinear) Lie $n$-derivation.

\begin{lemma}\label{xxsec3.2}
There is $T_{0}\in \mathcal{A}$ such that
$\varphi(P)-[P,T_{0}]\in \mathcal{Z_A}$.
\end{lemma}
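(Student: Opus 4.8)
The plan is to pin down $\varphi(P)$ up to an inner derivation by working in the Peirce decomposition relative to $P$ and $Q$. First I would write $\varphi(P)=B_{11}+B_{12}+B_{21}+B_{22}$ with $B_{ij}\in\mathcal{A}_{ij}$ and set $T_{0}=B_{12}-B_{21}$. Since $[P,X_{12}]=X_{12}$ and $[P,X_{21}]=-X_{21}$ for $X_{ij}\in\mathcal{A}_{ij}$, a direct check gives $[P,T_{0}]=B_{12}+B_{21}$, so that $\varphi(P)-[P,T_{0}]=B_{11}+B_{22}$ is diagonal. Because $\delta_{T_{0}}\colon X\mapsto[X,T_{0}]$ is an inner additive derivation, hence a nonlinear Lie $n$-derivation, and because the defining identity of a nonlinear Lie $n$-derivation is additive in the unknown map (by multilinearity of each $p_{n}$ in its slots), the difference $\psi:=\varphi-\delta_{T_{0}}$ is again a nonlinear Lie $n$-derivation, now with $\psi(P)=B_{11}+B_{22}$. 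The statement thus reduces to showing this diagonal element is central, and by Lemma \ref{xxsec2.1}(2) it suffices to prove $[\psi(P),X_{12}]=0$ and $[\psi(P),X_{21}]=0$ for all off-diagonal elements.

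The engine is the auxiliary identity $(1)$ together with the collapse $q_{n}(X_{12},P,\dots,P)=X_{12}$, which holds because $[P,X_{12}]=X_{12}$. Applying $(1)$ to $\psi$ on the arguments $(X_{12},P,\dots,P)$ yields
\[
\psi(X_{12})=q_{n}(\psi(X_{12}),P,\dots,P)+\sum_{i=2}^{n}q_{n}(X_{12},P,\dots,\psi(P),\dots,P).
\]
Since $\psi(P)$ is diagonal, $[\psi(P),X_{12}]\in\mathcal{A}_{12}$, and because $[P,\cdot]$ fixes $\mathcal{A}_{12}$ each of the $n-1$ summands on the right collapses exactly to $[\psi(P),X_{12}]$, while $q_{n}(\psi(X_{12}),P,\dots,P)$ annihilates the diagonal part of $\psi(X_{12})$ and leaves its $\mathcal{A}_{12}$-component untouched. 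The crucial feature is that the nonlinear term $\psi(X_{12})$ now occurs with coefficient $1$ on both sides, so comparing $\mathcal{A}_{12}$-components makes it cancel, leaving $(n-1)[\psi(P),X_{12}]=0$; as $\mathcal{A}$ has no additive torsion this forces $[\psi(P),X_{12}]=0$ for every $X_{12}$. Running the same scheme with $Q$ in place of $P$ (using $[Q,X_{21}]=X_{21}$) gives $[\psi(Q),X_{21}]=0$.

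The remaining step, $[\psi(P),X_{21}]=0$, is where the main obstacle lies: the sign asymmetry $[P,X_{21}]=-X_{21}$ makes $q_{n}(X_{21},P,\dots,P)=(-1)^{n-1}X_{21}$. For odd $n$ this coefficient is $+1$ and the argument above applies verbatim, but for even $n$ the left-hand side of $(1)$ becomes $\psi(-X_{21})$, which cannot be cancelled against $\psi(X_{21})$ in the absence of additivity. I would resolve this by interposing a single $Q$: since $[Q,X_{21}]=X_{21}$ and $[P,X_{21}]=-X_{21}$, one has $q_{n}(X_{21},Q,P,\dots,P)=(-1)^{n}X_{21}=X_{21}$ for even $n$, restoring coefficient $1$. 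Expanding $(1)$ on these arguments, the position‑$1$ term cancels $\psi(X_{21})$ as before, the position‑$2$ term carries the insertion $\psi(Q)$ and vanishes by the already established $[\psi(Q),X_{21}]=0$, and the remaining $n-2$ terms collapse to multiples of $[\psi(P),X_{21}]$, giving $(n-2)[\psi(P),X_{21}]=0$ and hence $[\psi(P),X_{21}]=0$ (recall $n\geq 3$). With both off-diagonal commutators vanishing, Lemma \ref{xxsec2.1}(2) yields $\psi(P)=\varphi(P)-[P,T_{0}]\in\mathcal{Z_A}$, as required.
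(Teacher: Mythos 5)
Your proposal is correct, and its engine is the same as the paper's: the collapse identity $q_n(X_{12},P,\dots,P)=X_{12}$, an application of the Lie $n$-derivation identity, and a comparison of off-diagonal Peirce components producing the factor $n-1$; you even choose exactly the paper's $T_0=P\varphi(P)Q-Q\varphi(P)P$. Two differences are worth recording. First, you subtract the inner derivation $\delta_{T_0}$ at the outset (correctly noting that $\varphi-\delta_{T_0}$ is again a nonlinear Lie $n$-derivation by multilinearity of $p_n$), whereas the paper keeps $\varphi$ itself, proves $P[\varphi(P),M_{12}]Q=0$ and $Q[\varphi(P),M_{21}]P=0$, concludes from Lemma \ref{xxsec2.1}(2) that $P\varphi(P)P+Q\varphi(P)Q\in\mathcal{Z}_{\mathcal{A}}$, and only then defines $T_0$; this reordering is purely cosmetic. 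Second, and more substantively, your treatment of the $\mathcal{A}_{21}$ side diverges. The sign obstruction you identify for even $n$ is real if one insists on the $q_n$-form, but the paper's ``similarly'' is most naturally read as switching to the $p_n$-form: since $[X_{21},P]=X_{21}$, one has $p_n(X_{21},P,\dots,P)=X_{21}$ for \emph{every} $n$, and the identical component argument gives $Q[\varphi(P),X_{21}]P=0$ with no case split and no reference to $\varphi(Q)$. Your detour --- first the $Q$-scheme, then the mixed identity $q_n(X_{21},Q,P,\dots,P)=X_{21}$ for even $n$ --- is correct but more laborious than necessary. One small overstatement along the way: since you do not yet know that $\psi(Q)$ is diagonal, the $Q$-scheme yields only $Q[\psi(Q),X_{21}]P=0$, not the full $[\psi(Q),X_{21}]=0$ (the $i=n$ summand contributes the entire commutator, which also has diagonal components). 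Fortunately this is harmless: your position-$2$ term passes through at least one $[P,\cdot]$, which annihilates those diagonal parts, so the component statement is all your argument actually uses.
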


\begin{proof}
It is clearly that $M_{12}=q_n(M_{12},P,\cdots,P)=[P,[P,\cdots,[P,[P, M_{12}]]\cdots]]$
for any $M_{12}\in\mathcal{A}_{12}$. Apply $\varphi$ to the identity, we get
$$\begin{aligned}
\varphi(M_{12})&=[\varphi(P),M_{12}]+[P,[\varphi(P),M_{12}]]+\cdots\\
&+[P,[P,\cdots,[P,[\varphi(P),M_{12}]]\cdots]]+[P,[P,\cdots,[P,[P,\varphi(M_{12})]]\cdots]].
\end{aligned} \eqno(2)
$$
Note that
$$P([P,[\varphi(P),M_{12}]])Q=P(P[\varphi(P),M_{12}]-[\varphi(P),M_{12}]P)Q=P[\varphi(P),M_{12}]Q,$$
$$P[P,\varphi(M_{12})]Q=P(P\varphi(M_{12})-\varphi(M_{12})P)Q=P\varphi(M_{12})Q.$$
Multiplying $P$ and $Q$ from the left and the right in the above
Eq.(2) respectively, we have
$$P\varphi(M_{12})Q=(n-1)P[\varphi(P),M_{12}]Q+P\varphi(M_{12})Q.$$
Therefore $P[\varphi(P),M_{12}]Q=0$. That is
$$M_{12}\varphi(P)Q=P\varphi(P)M_{12}. \eqno(3)$$
Similarly, we get
$$M_{21}\varphi(P)P=Q\varphi(P)P.\eqno(4)$$
Eq.(3) and Eq.(4) shows that
$$[P\varphi(P)P+Q\varphi(P)Q,M_{12}]=[P\varphi(P)P+Q\varphi(P)Q,M_{21}]=0.$$
It follows from Lemma \ref{xxsec2.1} (2) that $P\varphi(P)P+Q\varphi(P)Q\in
\mathcal{Z_A}$. Denote $T_{0}=P\varphi(P)Q-Q\varphi(P)P$, then
$\varphi(P)-[P, T_{0}]=P\varphi(P)P+Q\varphi(P)Q\in\mathcal{Z_A}$.
\end{proof}

Let $T_0$ be as in Lemma \ref{xxsec3.2}. The mapping defined by
$T\mapsto [T,T_0]$ for any $T\in\mathcal{A}$ is an inner derivation.
Therefore, from now on, we can assume without loss of generality
$\varphi(P)\in \mathcal{Z_A}$.

\begin{lemma}\label{xxsec3.3}
If $X\in \mathcal{A}_{ij}, i\neq j$, then $\varphi(X)\in
\mathcal{A}_{ij}$.
\end{lemma}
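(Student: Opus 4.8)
The plan is to read off the Peirce components of $\varphi(X)$ directly from the definition, using the normalization $\varphi(P)\in\mathcal{Z_A}$ already in force. Fix $X\in\mathcal{A}_{12}$ and recall from the proof of Lemma \ref{xxsec3.2} that $X=q_n(X,P,\dots,P)=[P,[P,\cdots,[P,X]\cdots]]$ with $n-1$ copies of $P$. Applying $\varphi$ through the identity $(1)$ and using that $\varphi(P)$ is central, every summand in which $\varphi$ lands on a $P$-slot contains a factor $[\varphi(P),\,\cdot\,]=0$ and therefore vanishes; only the first summand survives, so that
$$\varphi(X)=[P,[P,\cdots,[P,\varphi(X)]\cdots]]\quad(n-1\ \text{copies of}\ P).$$
Writing $\varphi(X)=\sum_{i,j}B_{ij}$ with $B_{ij}\in\mathcal{A}_{ij}$ and using $[P,B_{11}]=[P,B_{22}]=0$, $[P,B_{12}]=B_{12}$, $[P,B_{21}]=-B_{21}$, the right-hand side collapses to $B_{12}+(-1)^{n-1}B_{21}$. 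Comparing components forces $B_{11}=B_{22}=0$, so the diagonal part of $\varphi(X)$ already vanishes, and leaves the single relation $B_{21}=(-1)^{n-1}B_{21}$.

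When $n$ is even this finishes the $\mathcal{A}_{12}$-case at once: the relation reads $2B_{21}=0$, and since a von Neumann algebra is a complex algebra it has no additive $2$-torsion, whence $B_{21}=Q\varphi(X)P=0$ and $\varphi(X)=B_{12}\in\mathcal{A}_{12}$.

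The genuine difficulty is the odd case, in which $B_{21}=(-1)^{n-1}B_{21}$ is vacuous. It is worth noting that this is not an artifact of the particular seed: any seed assembled only from $P$ and $Q$ produces exactly the coefficient $(-1)^{n-1}$ on the $\mathcal{A}_{21}$-component, so no arrangement of projections can ever detect $B_{21}$ when $n$ is odd. To break this I would inject a genuine off-diagonal interaction into the seed. For $S\in\mathcal{A}_{11}$ one has $q_n(X,S,P,\dots,P)=SX\in\mathcal{A}_{12}$; applying $\varphi$ (again the $\varphi(P)$-slots drop out) and extracting the $\mathcal{A}_{21}$-component yields a multiplicative relation of the shape $Q\varphi(SX)P=Q\varphi(X)P\,S$, with a companion relation coming from the symmetric computation using $S\in\mathcal{A}_{22}$ on the right. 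Setting $\beta(X):=Q\varphi(X)P\in\mathcal{A}_{21}$, these relations say that the ``wrong'' off-diagonal component behaves multiplicatively, and the plan is to combine them with the faithfulness property Lemma \ref{xxsec2.1}(1) (together with $\mathcal{A}_{21}\cap\mathcal{Z_A}=0$) to conclude $\beta(X)\,\mathcal{A}_{12}=0$ and hence $\beta(X)=0$. This final closure is where I expect the main obstacle to lie, because the relations produced are recursive rather than terminating, and forcing $\beta(X)\,\mathcal{A}_{12}=0$ genuinely requires the associative structure of $\mathcal{A}$, not the Lie structure alone.

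The case $X\in\mathcal{A}_{21}$ is then handled by the mirror-image argument in which the roles of $P$ and $Q$, and of $\mathcal{A}_{12}$ and $\mathcal{A}_{21}$, are interchanged. This runs verbatim once one knows $\varphi(Q)\in\mathcal{Z_A}$, which I would establish by repeating the computation of Lemma \ref{xxsec3.2} with $Q$ and $\mathcal{A}_{21}$ in place of $P$ and $\mathcal{A}_{12}$.
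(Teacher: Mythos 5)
Your first step and the even case are fine, and they coincide with the paper's opening move: collapsing $\varphi(X)=[P,[P,\cdots,[P,\varphi(X)]\cdots]]$ kills the diagonal components and leaves the relation on $\beta(X):=Q\varphi(X)P$. But the odd case is precisely where the lemma lives, and your plan for it does not close — as you yourself concede. The relation $\beta(SX)=\beta(X)S$ for $S\in\mathcal{A}_{11}$ is recursive: it expresses one unknown value of $\beta$ through another, and no iteration of it (nor of the companion relation $T\beta(X)$ with $T\in\mathcal{A}_{22}$, which multiplies on the other side) ever produces a product of the form $\beta(X)Y_{12}$ with $Y_{12}\in\mathcal{A}_{12}$; everything it generates stays inside $\mathcal{A}_{21}\mathcal{A}_{11}\subseteq\mathcal{A}_{21}$, and the choice $S=P$ is a tautology. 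So Lemma \ref{xxsec2.1}(1) can never be invoked, and the step ``conclude $\beta(X)\mathcal{A}_{12}=0$'' is not a technical obstacle you can defer: it is the entire content of the lemma for odd $n$, and it remains unproved.

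The idea you are missing is to put an arbitrary $Y_{12}\in\mathcal{A}_{12}$ in the \emph{outermost} slot and then use Lemma \ref{xxsec3.1} to cancel the term you cannot control. Take $n$ odd (so $\varphi(X)=B_{12}+\beta(X)$ from your first step). Since $[Y_{12},X]=0\in\mathcal{Z_A}$, Lemma \ref{xxsec3.1} gives
$$Z:=[\varphi(Y_{12}),X]+[Y_{12},\varphi(X)]=[\varphi(Y_{12}),X]+[Y_{12},\beta(X)]\in\mathcal{Z_A},$$
while applying $\varphi$ to the identity $q_n(X,P,\cdots,P,Y_{12})=[Y_{12},[P,\cdots,[P,X]\cdots]]=0$ (the $P$-slots vanish since $\varphi(P)$ is central) gives
$$[\varphi(Y_{12}),X]+(-1)^{n-2}[Y_{12},\beta(X)]=[\varphi(Y_{12}),X]-[Y_{12},\beta(X)]=0,$$
because the inner string now contains only $n-2$ copies of $P$, so the $\mathcal{A}_{21}$-component of $\varphi(X)$ picks up the opposite sign. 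Subtracting, the unknown quantity $[\varphi(Y_{12}),X]$ cancels and one is left with $2[Y_{12},\beta(X)]=Z\in\mathcal{Z_A}$. By Lemma \ref{xxsec2.2} this central commutator is quasi-nilpotent, hence zero; then $\beta(X)Y_{12}=Y_{12}\beta(X)$ with the two sides lying in $\mathcal{A}_{22}$ and $\mathcal{A}_{11}$ respectively, so both vanish, and Lemma \ref{xxsec2.1}(1) forces $\beta(X)=0$. (The paper runs this argument uniformly in $n$, so no parity split is needed at all.) A secondary flaw: your route to the $\mathcal{A}_{21}$ case via $\varphi(Q)\in\mathcal{Z_A}$ is not available here, since repeating Lemma \ref{xxsec3.2} for $Q$ only yields $\varphi(Q)-[Q,T_1]\in\mathcal{Z_A}$ for some new $T_1$, and a second normalization by an inner derivation would destroy the one already performed for $P$; in the paper, $\varphi(Q)\in\mathcal{Z_A}$ is Lemma \ref{xxsec3.4} and is deduced \emph{from} the present lemma. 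The $\mathcal{A}_{21}$ case should instead be run with $P$ itself, using the right-bracketed polynomials $p_n$ in place of $q_n$.
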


\begin{proof}
We only treat the case $i=1,j=2$, the other case can be treated
similarly. If $X\in \mathcal{M}_{12}$, then
$X=[P,[P,\cdots,[P,X]\cdots]]$. Let $\varphi(X)=\Sigma_{1\leq
i,j\leq 2}X_{ij}$, where $X_{ij}\in \mathcal{A}_{ij}$. Since $\varphi(P)\in\mathcal{Z_A}$, we have
$$\begin{aligned}
\varphi(X)=[P,[P,\cdots,[P,\varphi(X)]\cdots]]=X_{12}+(-1)^{n-1}X_{21}.
\end{aligned}$$
Hence it is enough to show $X_{21}=0$.

For any $Y_{12}\in \mathcal{A}_{12}$, we have $[Y_{12},X]=0$. Lemma \ref{xxsec3.1} implies
$$
Z=[\varphi(Y_{12}),X]+[Y_{12},\varphi(X)]=[\varphi(Y_{12}),X]+(-1)^{n-1}[Y_{12},X_{21}]\in \mathcal{Z_A}.
$$
Apply $\varphi$ to the identity
$q_n(X,P,\cdots,P,Y_{12})=[Y_{12},[P,[P,\cdots,[P,X]\cdots]]]=0$, we have
$$\begin{aligned}
&[\varphi(Y_{12}),X]+[Y_{12},X_{12}+(-1)^{n-2}X_{21}]=[\varphi(Y_{12}),X]+(-1)^{n-2}[Y_{12},X_{21}]\\
=&Z-(-1)^{n-1}[Y_{12},X_{21}]+(-1)^{n-2}[Y_{12},X_{21}]=Z+(-1)^{n-2}2[Y_{12},X_{21}]=0.
\end{aligned}$$ Hence
$[X_{21},Y_{12}]=(-1)^{n-2}\frac{1}{2}Z\in\mathcal{Z_A}$.
By Lemma \ref{xxsec2.2}, $[X_{21},Y_{12}]$ is central quasi-nilpotent,
and hence $[X_{21},Y_{12}]=X_{21}Y_{12}-Y_{12}X_{21}=0$. This implies
$X_{21}Y_{12}=0$ for all $Y_{12}\in \mathcal{A}_{12}$, so $X_{21}=0$
by Lemma \ref{xxsec2.1} (1).
\end{proof}

\begin{lemma}\label{xxsec3.4}
$\varphi(Q)\in\mathcal{Z_A}$.
\end{lemma}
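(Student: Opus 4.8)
The plan is to establish $\varphi(Q)\in\mathcal{Z_A}$ by verifying the hypothesis of Lemma \ref{xxsec2.1}(2), namely that $\varphi(Q)$ commutes with every $M_{12}\in\mathcal{A}_{12}$ and every $M_{21}\in\mathcal{A}_{21}$. Write the Peirce decomposition $\varphi(Q)=Y_{11}+Y_{12}+Y_{21}+Y_{22}$ with $Y_{ij}\in\mathcal{A}_{ij}$. The first step is to kill the off-diagonal parts. Since $P$ and $Q$ are orthogonal, $[P,Q]=0\in\mathcal{Z_A}$, so Lemma \ref{xxsec3.1} yields $[\varphi(P),Q]+[P,\varphi(Q)]\in\mathcal{Z_A}$; the running normalization $\varphi(P)\in\mathcal{Z_A}$ makes the first bracket vanish, leaving $[P,\varphi(Q)]\in\mathcal{Z_A}$. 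A direct computation gives $[P,\varphi(Q)]=Y_{12}-Y_{21}$, which lies in $\mathcal{A}_{12}\oplus\mathcal{A}_{21}$. As any central element $Z$ satisfies $PZQ=QZP=0$ and hence has no off-diagonal Peirce components, we conclude $Y_{12}=Y_{21}=0$, so $\varphi(Q)=Y_{11}+Y_{22}$ is diagonal. This diagonality is exactly what keeps the next computation tractable.

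For the second step I would feed a suitable $(n-1)$-th commutator into the defining identity in its equivalent form $(1)$. Using $[Q,M_{12}]=-M_{12}$ and $[P,M_{12}]=M_{12}$, one checks that $q_n(M_{12},Q,Q,P,\dots,P)=M_{12}$: the two inner $Q$-brackets contribute $(-1)^2=1$ and the remaining $P$-brackets act as the identity on $\mathcal{A}_{12}$. Applying $(1)$ and discarding every summand that carries a $\varphi(P)$ (these vanish because $\varphi(P)$ is central), only the slot of $M_{12}$ and the two $Q$-slots survive. The $M_{12}$-slot gives $q_n(\varphi(M_{12}),Q,Q,P,\dots,P)$, which by Lemma \ref{xxsec3.3} ($\varphi(M_{12})\in\mathcal{A}_{12}$) equals $\varphi(M_{12})$ and cancels the left-hand side $\varphi(M_{12})$. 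Writing $W=\varphi(Q)=Y_{11}+Y_{22}$ and noting $[W,M_{12}]=Y_{11}M_{12}-M_{12}Y_{22}\in\mathcal{A}_{12}$, each $Q$-slot contributes $-(Y_{11}M_{12}-M_{12}Y_{22})$. What remains is $2(Y_{11}M_{12}-M_{12}Y_{22})=0$, and $2$-torsion freeness of the (complex) algebra gives $[\varphi(Q),M_{12}]=Y_{11}M_{12}-M_{12}Y_{22}=0$ for all $M_{12}$.

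The companion computation for $\mathcal{A}_{21}$ is cleanest with the all-$Q$ word $q_n(M_{21},Q,\dots,Q)$, because $[Q,M_{21}]=M_{21}$ carries no sign, so this nested commutator is $M_{21}$ and its $M_{21}$-slot again reproduces $\varphi(M_{21})$. The $n-1$ surviving $Q$-slots each contribute $Y_{22}M_{21}-M_{21}Y_{11}$, leaving $(n-1)(Y_{22}M_{21}-M_{21}Y_{11})=0$ and hence $[\varphi(Q),M_{21}]=0$ for all $M_{21}$. With $\varphi(Q)$ now commuting with all of $\mathcal{A}_{12}$ and $\mathcal{A}_{21}$, Lemma \ref{xxsec2.1}(2) delivers $\varphi(Q)\in\mathcal{Z_A}$, completing the proof.

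I expect the main obstacle to be the nonlinearity of $\varphi$: since scalars and sums cannot be pulled through $\varphi$, a careless choice such as $q_n(M_{12},Q,\dots,Q)=(-1)^{n-1}M_{12}$ produces an uncontrollable $\varphi(\pm M_{12})$ on the left when $n$ is even. The real content of the argument is thus the engineering of the words $Q,Q,P,\dots,P$ (for $\mathcal{A}_{12}$) and $Q,\dots,Q$ (for $\mathcal{A}_{21}$) so that the nested commutator equals exactly $+M_{12}$, respectively $+M_{21}$, and the differentiated $M_{12}$- (resp.\ $M_{21}$-) slot reproduces and cancels the left-hand side, isolating a clean torsion equation. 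Carrying out the diagonality step first is crucial: were the off-diagonal parts $Y_{12},Y_{21}$ still present, the brackets $[W,M_{12}]$ and $[Q,W]$ would spread across several Peirce spaces and destroy the clean cancellation.
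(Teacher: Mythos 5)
Your proof is correct: every step checks out (the diagonality argument via Lemma \ref{xxsec3.1}, the computation of the two $Q$-slot contributions giving $2[\varphi(Q),M_{12}]=0$, the $(n-1)[\varphi(Q),M_{21}]=0$ equation, and the final appeal to Lemma \ref{xxsec2.1}(2)), and all the lemmas you invoke are available at this point in the paper. The overall skeleton is the same as the paper's — verify the hypothesis of Lemma \ref{xxsec2.1}(2) by applying $\varphi$ to commutator words that reproduce $M_{12}$ and $M_{21}$, using centrality of $\varphi(P)$ and Lemma \ref{xxsec3.3} to cancel the left-hand side — but your tactical execution differs in a way worth noting. The paper places $Q$ in the \emph{outermost} slot, using $p_n(M_{12},P,\dots,P,Q)=M_{12}$ for $n$ even and $p_n(P,M_{12},P,\dots,P,Q)=M_{12}$ for $n$ odd; then the only surviving $\varphi(Q)$-term is the outermost bracket $[M_{12},\varphi(Q)]$, which is immediately forced to be zero. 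This buys two simplifications over your route: no preliminary reduction to a diagonal $\varphi(Q)$ is needed (the off-diagonal parts of $\varphi(Q)$ never get conjugated by further brackets), and no division is needed (you must divide by $2$ and by $n-1$, which is harmless in a complex algebra but is an extra appeal to torsion-freeness that the paper avoids). What your version buys in exchange is a single word $q_n(M_{12},Q,Q,P,\dots,P)$ that works uniformly in $n$, whereas the paper must split into even and odd cases to dodge exactly the sign problem you flag in your last paragraph.
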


\begin{proof}
For any $M_{12}\in \mathcal{A}_{12}$, we note that

if $n$ is even, then
$M_{12}=p_n(M_{12},P,\cdots,P,Q)=[[\cdots[[M_{12},P],P],\cdots,P],Q]$,

if $n$ is odd, then $M_{12}=p_n(P,M_{12},P,\cdots,P,Q)=[[\cdots [[P, M_{12}],P],\cdots,P],Q]$.\\
For both cases, since $\varphi(M_{12})\in\mathcal{A}_{12}$, we have
$$\begin{aligned}
\varphi(M_{12})&=[[\cdots[\varphi(M_{12}),P],\cdots,P],Q]+[M_{12},\varphi(Q)]\\
&=\varphi(M_{12})+[M_{12},\varphi(Q)].
\end{aligned}$$
So $[M_{12},\varphi(Q)]=0$. We can similarly prove that $[M_{21},\varphi(Q)]=0$ for any $M_{21}\in \mathcal{A}_{21}$.
Therefore $\varphi(Q)\in\mathcal{Z_A}$ follows from Lemma \ref{xxsec2.1} (2).
\end{proof}

\begin{lemma}\label{xxsec3.5}
If $X\in \mathcal{A}_{ii}$, then
$\varphi(X)\in\mathcal{A}_{ii}+\mathcal{Z_A}\ (i=1,2)$.
\end{lemma}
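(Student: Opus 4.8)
The plan is to treat the case $i=1$, the case $i=2$ being entirely symmetric. Write $\varphi(X)=Y_{11}+Y_{12}+Y_{21}+Y_{22}$ with $Y_{ij}\in\mathcal{A}_{ij}$. The first step is to dispose of the off-diagonal corners. Since $X\in\mathcal{A}_{11}$ commutes with $P$, we have $[X,P]=0\in\mathcal{Z_A}$, so Lemma \ref{xxsec3.1} gives $[\varphi(X),P]+[X,\varphi(P)]\in\mathcal{Z_A}$. After the normalization following Lemma \ref{xxsec3.2} we have $\varphi(P)\in\mathcal{Z_A}$, so the second term vanishes and hence $[\varphi(X),P]\in\mathcal{Z_A}$. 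A direct Peirce computation shows $[\varphi(X),P]=Y_{21}-Y_{12}\in\mathcal{A}_{12}+\mathcal{A}_{21}$; since every central element is diagonal, this forces $Y_{12}=Y_{21}=0$, and therefore $\varphi(X)=Y_{11}+Y_{22}$.

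The crux of the matter is to show that the cross corner $Y_{22}$ is central. Fix $M_{22}\in\mathcal{A}_{22}$. Because $[X,M_{22}]=0$, Lemma \ref{xxsec3.1} yields $C:=[\varphi(X),M_{22}]+[X,\varphi(M_{22})]\in\mathcal{Z_A}$. Writing $\varphi(M_{22})$ in Peirce form, one checks that $[X,\varphi(M_{22})]$ has no $\mathcal{A}_{22}$-component (each of its summands lands in $\mathcal{A}_{11}$, $\mathcal{A}_{12}$ or $\mathcal{A}_{21}$, or vanishes), whereas $[\varphi(X),M_{22}]=[Y_{22},M_{22}]\in\mathcal{A}_{22}$. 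Cutting $C$ down by $Q$ therefore gives $[Y_{22},M_{22}]=QCQ$, the $Q$-corner of a central element, so that $[Y_{22},M_{22}]$ lies in the center $\mathcal{Z}(\mathcal{A}_{22})$ of the corner algebra. In particular $[Y_{22},M_{22}]$ commutes with $M_{22}$, i.e. $[[Y_{22},M_{22}],M_{22}]=0$, so the Kleinecke--Shirokov theorem (Lemma \ref{xxsec2.2}) shows that $[Y_{22},M_{22}]$ is quasinilpotent. A quasinilpotent element of the commutative von Neumann algebra $\mathcal{Z}(\mathcal{A}_{22})$ is normal with vanishing spectral radius, hence $0$; thus $[Y_{22},M_{22}]=0$ for every $M_{22}\in\mathcal{A}_{22}$, that is, $Y_{22}\in\mathcal{Z}(\mathcal{A}_{22})$.

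Finally, since $\overline{Q}=I$, the standard identification $Z\mapsto ZQ$ realizes $\mathcal{Z_A}$ as $\mathcal{Z}(\mathcal{A}_{22})$, so $Y_{22}=QZQ$ for some $Z\in\mathcal{Z_A}$; absorbing $PZP$ into the $\mathcal{A}_{11}$-corner gives $\varphi(X)=(Y_{11}-PZP)+Z\in\mathcal{A}_{11}+\mathcal{Z_A}$, as required.

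I expect the main obstacle to be precisely the centrality of $Y_{22}$. Lemma \ref{xxsec3.1} by itself only pins $[Y_{22},M_{22}]$ down to the center of the corner rather than to $0$, because the central element $C$ it produces may have a nonzero $Q$-corner; the essential extra inputs are the Kleinecke--Shirokov theorem, which upgrades a commutator commuting with one of its factors to a quasinilpotent, and the central-carrier identification $\mathcal{Z}(\mathcal{A}_{22})=Q\mathcal{Z_A}Q$ afforded by $\overline{Q}=I$, which together close the gap. By contrast, the vanishing of the off-diagonal corners is immediate from Lemma \ref{xxsec3.1} and the normalization $\varphi(P)\in\mathcal{Z_A}$.
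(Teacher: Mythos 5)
Your proof is correct and follows essentially the same approach as the paper's: first kill the off-diagonal Peirce corners of $\varphi(X)$ using $\varphi(P)\in\mathcal{Z_A}$, then show the $Q$-corner lies in $\mathcal{Z}_{\mathcal{A}_{22}}$ via Lemma \ref{xxsec3.1} combined with Kleinecke--Shirokov (Lemma \ref{xxsec2.2}), and finally absorb it into $\mathcal{A}_{11}+\mathcal{Z_A}$ through the identification $\mathcal{Z}_{\mathcal{A}_{22}}=Q\mathcal{Z_A}$. Your only deviations are tactical and harmless: you eliminate $Y_{12},Y_{21}$ by applying Lemma \ref{xxsec3.1} to $[X,P]=0$ rather than applying $\varphi$ to $p_n(X,P,\cdots,P)=0$ as the paper does, and you bypass the paper's need to first establish the Peirce form of $\varphi(M_{22})$ by observing directly that $[X,\varphi(M_{22})]$ has no $\mathcal{A}_{22}$-component.
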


\begin{proof}
Assume that $X\in \mathcal{A}_{11}$. Write $L(X)=\Sigma_{1\leq
i,j\leq2}X_{ij}$, where $X_{ij}\in \mathcal{A}_{ij}$. Since
$p_n(X,P,\cdots,P)=[\cdots[[X,P],P],\cdots,P]=0,$ we obtain
$$[\cdots[[\varphi(X),P],P],\cdots,P]=X_{21}+(-1)^{n-1}X_{12}=0.$$
Hence $X_{12}=X_{21}=0$. Thus $\varphi(X)=X_{11}+X_{22}$.

For any $Y\in\mathcal{A}_{22},$We can similarly prove that
$\varphi(Y)=Y_{11}+Y_{22}$, where $Y_{ii}\in \mathcal{A}_{ii}$. Since
$[X,Y]=0$, we have from Lemma \ref{xxsec3.1}
$$Z=[\varphi(X),Y]+[X,\varphi(Y)]=[X_{22},Y]+[X,Y_{11}]\in\mathcal{Z_A}.$$
This implies that $[X_{22},Y]=QZ\in
Q\mathcal{Z}_{\mathcal{A}}=\mathcal{Z}_{\mathcal{A}_{22}}$. Thus
$[X_{22},Y]$ is central quasi-nilpotent in $\mathcal{A}_{22}$ and
hence is zero. So that $X_{22}\in \mathcal{Z}_{\mathcal{A}_{22}}$.
There exists $C\in \mathcal{Z}_{\mathcal{A}}$ such that
$X_{22}=QC=(I-P)C=-PC+C\in\mathcal{A}_{11}+\mathcal{Z}_{\mathcal{A}}$.
Hence
$\varphi(X)=X_{11}+X_{22}=X_{11}-PC+C\in\mathcal{A}_{11}+\mathcal{Z}_{\mathcal{A}}$.
Similarly $\varphi(Y)\in\mathcal{A}_{22}+\mathcal{Z_A}$ and this completes the proof of the lemma.
\end{proof}

\begin{lemma}\label{xxsec3.6}
Let $X\in \mathcal{A}$.
\begin{enumerate}
\item[(1)] If $n$ is even, then $\varphi(PXQ-QXP)=P\varphi(X)Q-Q\varphi(X)P$. If $n$ is odd, then
$\varphi(PXQ+QXP)=P\varphi(X)Q+Q\varphi(X)P$.

\item[(2)] If $PXQ=0$, then $P\varphi(X)Q=0$. If $QXP=0$, then $Q\varphi(X)P=0$.
\end{enumerate}
\end{lemma}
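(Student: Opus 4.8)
The plan is to evaluate $\varphi$ on the single $(n-1)$-th commutator $q_n(X,P,\dots,P)$, with $X$ in the first slot and $n-1$ copies of $P$, and to exploit $\varphi(P)\in\mathcal{Z_A}$. First I would record the elementary Peirce computation $q_n(X,P,\dots,P)=[P,[P,\dots,[P,X]\cdots]]=PXQ+(-1)^{n-1}QXP$, which holds because $\operatorname{ad}_P$ acts as $+1$ on $\mathcal{A}_{12}$, as $-1$ on $\mathcal{A}_{21}$, and as $0$ on the diagonal. Hence for even $n$ this expression is exactly $PXQ-QXP$, while for odd $n$ it is exactly $PXQ+QXP$; running the same identity with $\varphi(X)$ in place of $X$ gives $q_n(\varphi(X),P,\dots,P)=P\varphi(X)Q+(-1)^{n-1}Q\varphi(X)P$, which are precisely the right-hand sides in the two cases of assertion (1).

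Next I would apply the $q_n$-form $(1)$ of the Lie $n$-derivation identity to $q_n(X,P,\dots,P)$. In the resulting sum $\sum_{i=1}^{n}q_n(X,\dots,\varphi(P)\text{ in slot }i,\dots,P)$, every term with $i\geq 2$ vanishes: since $\varphi(P)$ is central, the innermost bracket in which it first appears, namely $[\varphi(P),q_{i-1}(\dots)]$, is already zero, so the whole nested commutator is zero. Only the $i=1$ term survives, leaving $\varphi(q_n(X,P,\dots,P))=q_n(\varphi(X),P,\dots,P)$. Combined with the two Peirce identities of the previous paragraph, this is exactly assertion (1) for each parity of $n$.

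For part (2) I would feed the hypothesis into the formula of part (1) and read off a single Peirce component with the help of Lemma \ref{xxsec3.3}. Suppose $PXQ=0$. For even $n$, part (1) gives $\varphi(-QXP)=P\varphi(X)Q-Q\varphi(X)P$; since $-QXP\in\mathcal{A}_{21}$, Lemma \ref{xxsec3.3} yields $\varphi(-QXP)\in\mathcal{A}_{21}$, so compressing both sides by $P$ on the left and $Q$ on the right annihilates the left-hand side together with the term $Q\varphi(X)P$ and leaves $P\varphi(X)Q=0$. The odd-$n$ case is identical with $\varphi(QXP)$ replacing $\varphi(-QXP)$. When instead $QXP=0$, one compresses by $Q$ on the left and $P$ on the right and uses $\varphi(\mathcal{A}_{12})\subseteq\mathcal{A}_{12}$ in the same manner to obtain $Q\varphi(X)P=0$.

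Since the combinatorics is forced once the correct commutator is chosen, I expect no serious obstacle here; the only point demanding care is the parity bookkeeping of the sign $(-1)^{n-1}$, which is what makes the two cases of (1) come out with a minus and a plus respectively, and which correspondingly dictates whether one extracts the $\mathcal{A}_{12}$- or the $\mathcal{A}_{21}$-component in part (2).
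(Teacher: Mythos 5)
Your proposal is correct and follows essentially the same route as the paper: both use the identity $q_n(X,P,\dots,P)=PXQ+(-1)^{n-1}QXP$, apply the $q_n$-form of the Lie $n$-derivation with the standing normalization $\varphi(P)\in\mathcal{Z_A}$ killing all terms past the first, and then deduce (2) by feeding the hypothesis into (1) and invoking Lemma \ref{xxsec3.3} to isolate the relevant Peirce component. Your write-up merely makes explicit the vanishing of the $i\geq 2$ terms, which the paper leaves implicit in the phrase ``applying $\varphi$ to the last equation.''
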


\begin{proof}
(1). It is easy to see that
$$
q_n(X,P,\cdots,P)=[P,[P,\cdots,[P,X]\cdots]]=PXQ+(-1)^{n-1}QXP.
$$
Applying $\varphi$ to the last equation we get the desired results.

(2). Assume $PXQ=0$. If $n$ is odd, there is
$\varphi(QXP)=\varphi(PXQ+QXP)=P\varphi(X)Q+Q\varphi(X)P\in \mathcal{A}_{21}$. Therefore
$P\varphi(X)Q=0$. The other cases can be proved similarly.
\end{proof}

\begin{lemma}\label{xxsec3.7}
For any $X\in\mathcal{A}$, $X_{12},Y_{12}\in\mathcal{A}_{12}$, we have
\begin{enumerate}
\item[(1)] $[\varphi(X+X_{12})-\varphi(X),Y_{12}]=0$,

\item[(2)] $\varphi(X+X_{12})-\varphi(X)=P(\varphi(X+X_{12})-\varphi(X))Q+Z$,
where $Z\in \mathcal{Z}_{\mathcal{A}}$.
\end{enumerate}
\end{lemma}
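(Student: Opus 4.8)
The plan is to prove (1) from a single well-chosen instance of the Lie $n$-derivation identity, and then to extract the structural statement (2) by decomposing (1) componentwise and bootstrapping it with the multiplicativity of $\mathcal{A}$. Throughout I write $D=\varphi(X+X_{12})-\varphi(X)$ and use the normalization $\varphi(P)\in\mathcal{Z_A}$ together with Lemma \ref{xxsec3.3}.

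For (1), I would exploit the identity
$$q_n(Y_{12},P,\cdots,P,X)=[X,[P,[P,\cdots,[P,Y_{12}]\cdots]]]=[X,Y_{12}],$$
which holds because $[P,Y_{12}]=Y_{12}$ for $Y_{12}\in\mathcal{A}_{12}$, so each of the $n-2$ inner brackets against $P$ fixes $Y_{12}$ (here $n\geq 3$). Applying the $q$-form $(1)$ of the Lie $n$-derivation identity, every term in which a middle $P$ is replaced by $\varphi(P)$ collapses, since $\varphi(P)$ is central and appears inside a commutator; and $\varphi(Y_{12})\in\mathcal{A}_{12}$ also satisfies $[P,\varphi(Y_{12})]=\varphi(Y_{12})$. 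This should reduce the identity to $\varphi([X,Y_{12}])=[X,\varphi(Y_{12})]+[\varphi(X),Y_{12}]$. Running the same computation with $X$ replaced by $X+X_{12}$ and observing that $[X+X_{12},Y_{12}]=[X,Y_{12}]$ (because $[X_{12},Y_{12}]=0$), the two left-hand sides coincide; subtracting and using $[X_{12},\varphi(Y_{12})]=0$ (both factors lie in $\mathcal{A}_{12}$) then yields $[D,Y_{12}]=0$, which is (1).

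For (2), I would decompose $D=D_{11}+D_{12}+D_{21}+D_{22}$ and read off $[D,Y_{12}]=0$ componentwise: the $\mathcal{A}_{22}$-part gives $D_{21}Y_{12}=0$ for all $Y_{12}$, hence $D_{21}=0$ by Lemma \ref{xxsec2.1}(1), while the $\mathcal{A}_{12}$-part gives $D_{11}Y_{12}=Y_{12}D_{22}$, i.e. $C:=D_{11}+D_{22}$ commutes with $\mathcal{A}_{12}$. To promote $C$ to a central element I would feed products back into the relation $D_{11}Y_{12}=Y_{12}D_{22}$: replacing $Y_{12}$ by $W_{11}Y_{12}$ and by $Y_{12}W_{22}$ and invoking Lemma \ref{xxsec2.1}(1) (after taking an adjoint in the left-multiplication case) should give $[D_{11},W_{11}]=0$ and $[D_{22},W_{22}]=0$, so $D_{11}$ and $D_{22}$ are central in their respective corners. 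Finally, for $S\in\mathcal{A}_{12}$ and $T\in\mathcal{A}_{21}$ the product $ST$ lies in $\mathcal{A}_{11}$, and combining corner-centrality of $D_{11}$ with $D_{11}S=SD_{22}$ gives $S(D_{22}T-TD_{11})=0$ for all $S$, whence $D_{22}T=TD_{11}$ by Lemma \ref{xxsec2.1}(1). Thus $C$ commutes with both $\mathcal{A}_{12}$ and $\mathcal{A}_{21}$, so $C\in\mathcal{Z_A}$ by Lemma \ref{xxsec2.1}(2); since $PDQ=D_{12}$ and $D_{21}=0$, this is exactly $D=PDQ+C$ with $C\in\mathcal{Z_A}$.

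The step I expect to be the main obstacle is the centralization of $C$ in (2). The derivation identity is compatible with the perturbation $X\mapsto X+X_{12}$ only against $\mathcal{A}_{12}$: the same trick run against $Y_{21}$ fails, because $[X+X_{12},Y_{21}]=[X,Y_{21}]+[X_{12},Y_{21}]$ and the extra term $[X_{12},Y_{21}]$ need not vanish, so the nonlinearity of $\varphi$ blocks the subtraction. One therefore cannot obtain commutation with $\mathcal{A}_{21}$ directly and must instead recover it through the multiplicative bootstrapping above, which is precisely where the hypotheses $\overline{P}=\overline{Q}=I$ (via Lemma \ref{xxsec2.1}(1) and (2)) do the essential work.
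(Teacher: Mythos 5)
Your proposal is correct, and it departs from the paper's proof in two ways that are worth recording. For (1), the paper uses the same family of identities but wraps everything in an extra outer variable, working with $q_n(Y_{12},P,\cdots,P,X+X_{12},Y)$ (with $n-3$ copies of $P$); it therefore only obtains $[Y,[\varphi(X+X_{12})-\varphi(X),Y_{12}]]=0$ for all $Y\in\mathcal{A}$, i.e.\ centrality of the bracket, and must then invoke the Kleinecke--Shirokov theorem (Lemma \ref{xxsec2.2}) to conclude that this central element is quasi-nilpotent and hence zero. Your choice $q_n(Y_{12},P,\cdots,P,X)$ with $n-2$ copies of $P$ yields the reduced identity $\varphi([X,Y_{12}])=[\varphi(X),Y_{12}]+[X,\varphi(Y_{12})]$ directly, and subtracting its two instances (using $[X_{12},Y_{12}]=0$ and $[X_{12},\varphi(Y_{12})]=0$) kills the bracket outright --- the same key ingredients ($\varphi(P)\in\mathcal{Z}_{\mathcal{A}}$, $\varphi(Y_{12})\in\mathcal{A}_{12}$ from Lemma \ref{xxsec3.3}), but with no appeal to quasi-nilpotency, which is a genuine streamlining. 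For (2), the paper simply cites \cite[Lemma 2]{BaiDu}, whereas your corner-decomposition argument ($D_{21}=0$ via Lemma \ref{xxsec2.1}(1); corner-centrality of $D_{11}$ and $D_{22}$ by substituting $W_{11}Y_{12}$ and $Y_{12}W_{22}$ into $D_{11}Y_{12}=Y_{12}D_{22}$; commutation with $\mathcal{A}_{21}$ via products $ST\in\mathcal{A}_{11}$; then Lemma \ref{xxsec2.1}(2)) is a correct, self-contained proof of exactly the fact that the cited lemma supplies, making the argument independent of the external reference. One small point: in your final application of Lemma \ref{xxsec2.1}(1) to $S(D_{22}T-TD_{11})=0$, the annihilated element again sits on the right of the product, so the same adjoint trick you used in the $Y_{12}W_{22}$ step is needed there as well. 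Your closing observation --- that the perturbation argument cannot be run against $\mathcal{A}_{21}$ because $[X_{12},Y_{21}]$ need not vanish, which is why commutation with $\mathcal{A}_{21}$ must be recovered multiplicatively --- is an accurate diagnosis of where the real content of (2) lies.
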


\begin{proof}
(1). Since $[X+X_{12},Y_{12}]=[X,Y_{12}]$, we have
$$[Y,[X+X_{12},[P,\cdots,[P,Y_{12}]\cdots]]]=[Y,[X,[P,\cdots,[P,Y_{12}]\cdots]]]$$
for all $Y\in\mathcal{A}$. Note that $\varphi(Y_{12})\in\mathcal{A}_{12}$.
Applying $\varphi$ to the equation, we have
$$[Y,[\varphi(X+X_{12}),Y_{12}]]+[Y,[X+X_{12},\varphi(Y_{12})]]=[Y,[\varphi(X),Y_{12}]]+[Y,[X,\varphi(Y_{12})]].$$
Then $[Y,[\varphi(X+X_{12})-\varphi(X),Y_{12}]]=0$. It in turn
implies that $[\varphi(X+X_{12})-\varphi(X),Y_{12}]$ is central quasi-nilpotent
in $\mathcal{A}$ and hence is zero.

(2). It follows from (1) and \cite[Lemma 2]{BaiDu}.
\end{proof}

\begin{lemma}\label{xxsec3.8}
For $1\leq i\neq j\leq 2$, we have
\begin{enumerate}
\item[(1)] $\varphi(X_{ii}+X_{ij})-\varphi(X_{ii})-\varphi(X_{ij})\in
\mathcal{Z}_{\mathcal{A}}$,

\item[(2)] $\varphi(X_{ii}+X_{ji})-\varphi(X_{ii})-\varphi(X_{ji})\in
\mathcal{Z}_{\mathcal{A}}$.
\end{enumerate}
\end{lemma}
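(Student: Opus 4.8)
The plan is to prove statement (1) for the representative case $i=1,j=2$, namely $\varphi(X_{11}+X_{12})-\varphi(X_{11})-\varphi(X_{12})\in\mathcal{Z_A}$; the remaining case and statement (2) follow by the symmetric argument swapping the roles of $P$ and $Q$. First I would observe that $X_{11}+X_{12}=(P+X_{11})X_{12}\cdots$—more usefully, that $X_{12}=P(X_{11}+X_{12})Q$, so the off-diagonal part of the sum is exactly $X_{12}$. The strategy is to feed the element $X_{11}+X_{12}$ into Lemma \ref{xxsec3.7}, which was tailored precisely to compare $\varphi$ on $X+X_{12}$ with $\varphi(X)$ for an arbitrary $X$. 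Taking $X=X_{11}$ there, part (2) of Lemma \ref{xxsec3.7} already gives
\[
\varphi(X_{11}+X_{12})-\varphi(X_{11})=P\bigl(\varphi(X_{11}+X_{12})-\varphi(X_{11})\bigr)Q+Z_1
\]
with $Z_1\in\mathcal{Z_A}$; so the difference lives in $\mathcal{A}_{12}+\mathcal{Z_A}$.

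Next I would pin down the $\mathcal{A}_{12}$-component and show it equals $\varphi(X_{12})$ modulo the center. Since $X_{12}\in\mathcal{A}_{12}$, Lemma \ref{xxsec3.3} gives $\varphi(X_{12})\in\mathcal{A}_{12}$, so it suffices to show that the $(1,2)$-corner $P\bigl(\varphi(X_{11}+X_{12})-\varphi(X_{11})\bigr)Q$ coincides with $\varphi(X_{12})$. The natural tool is a commutator computation: for any $Y_{12}\in\mathcal{A}_{12}$, Lemma \ref{xxsec3.7}(1) yields $[\varphi(X_{11}+X_{12})-\varphi(X_{11}),Y_{12}]=0$, and I would compare this against the relation satisfied by $\varphi(X_{12})$. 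The difference of the two corner terms is an element $W_{12}\in\mathcal{A}_{12}$ annihilated (in the commutator sense) by all $Y_{12}$; I expect a parallel computation—bracketing with $Y_{21}\in\mathcal{A}_{21}$ and exploiting that central quasi-nilpotents vanish via Lemma \ref{xxsec2.2}—to force $W_{12}=0$. This requires producing an identity $[W_{12},Y_{21}]=0$ or $W_{12}Y_{21}=0$ and then invoking Lemma \ref{xxsec2.1}(1) to conclude $W_{12}=0$.

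The main obstacle is the second step: establishing that the $(1,2)$-corner of $\varphi(X_{11}+X_{12})-\varphi(X_{11})$ is exactly $\varphi(X_{12})$ rather than merely congruent to it modulo something I cannot yet control. Lemma \ref{xxsec3.7}(1) only tells me the corner commutes with all of $\mathcal{A}_{12}$, which is automatic for any element of $\mathcal{A}_{12}+\mathcal{A}_{11}$ and hence not by itself decisive. I anticipate needing a second defining relation—most likely obtained by applying $\varphi$ to a carefully chosen vanishing $(n-1)$-th commutator involving $X_{11}$, $X_{12}$, and copies of $P$ and $Q$, analogous to the identities used in Lemmas \ref{xxsec3.3} and \ref{xxsec3.6}—to generate the bracket $[W_{12},Y_{21}]=0$ and thereby separate $\varphi(X_{12})$ from the corner. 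Once the corner is identified with $\varphi(X_{12})$, combining with the display above gives
\[
\varphi(X_{11}+X_{12})-\varphi(X_{11})-\varphi(X_{12})=Z_1\in\mathcal{Z_A},
\]
which is the claim. The symmetric cases (the $i=2$ instance of (1) and both instances of (2)) are handled by interchanging $P\leftrightarrow Q$ and $\mathcal{A}_{12}\leftrightarrow\mathcal{A}_{21}$ throughout, using the $P$–$Q$ symmetric parts of Lemmas \ref{xxsec3.3}, \ref{xxsec3.6}, and \ref{xxsec3.7}.
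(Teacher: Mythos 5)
Your first step coincides with the paper's: Lemma \ref{xxsec3.7}(2) with $X=X_{11}$ places $\varphi(X_{11}+X_{12})-\varphi(X_{11})$ in $\mathcal{A}_{12}+\mathcal{Z}_{\mathcal{A}}$. But the heart of the lemma --- identifying the corner $P(\varphi(X_{11}+X_{12})-\varphi(X_{11}))Q$ with $\varphi(X_{12})$ --- is exactly the part you leave unproved: you only ``anticipate'' producing some vanishing $(n-1)$-th commutator that yields $[W_{12},Y_{21}]=0$, without exhibiting one. That is a genuine gap, and the route you sketch is doubtful on its face: there is no analogue of Lemma \ref{xxsec3.7} with $Y_{21}$ in place of $Y_{12}$, because $[X_{11}+X_{12},Y_{21}]$ and $[X_{11},Y_{21}]$ differ by $[X_{12},Y_{21}]$, which does not vanish in general, so the obvious source of such a relation dries up.

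The missing step requires no new commutator identity at all; it is an immediate application of Lemma \ref{xxsec3.6}(1), which you mention only as an ``analogy'' rather than invoke. Apply it to the single element $X=X_{11}+X_{12}$: since $P(X_{11}+X_{12})Q=X_{12}$ and $Q(X_{11}+X_{12})P=0$, Lemma \ref{xxsec3.6}(1) gives
$$
\varphi(X_{12})=\varphi\bigl(P(X_{11}+X_{12})Q\pm Q(X_{11}+X_{12})P\bigr)
=P\varphi(X_{11}+X_{12})Q\pm Q\varphi(X_{11}+X_{12})P,
$$
the sign depending on the parity of $n$. Since $\varphi(X_{12})\in\mathcal{A}_{12}$ by Lemma \ref{xxsec3.3}, the $\mathcal{A}_{21}$-component on the right must vanish, so $\varphi(X_{12})=P\varphi(X_{11}+X_{12})Q$; and since $\varphi(X_{11})\in\mathcal{A}_{11}+\mathcal{Z}_{\mathcal{A}}$ by Lemma \ref{xxsec3.5}, we have $P\varphi(X_{11})Q=0$, whence $\varphi(X_{12})=P(\varphi(X_{11}+X_{12})-\varphi(X_{11}))Q$. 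Substituting this into your first display finishes the proof, and the remaining three cases do follow by the $P\leftrightarrow Q$ symmetry exactly as you say. This is precisely the paper's argument; your proposal matches it up to, but not including, this decisive step.
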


\begin{proof} We only prove (1) for $i=1, j=2$.  The other cases can be proved similarly.
From Lemma \ref{xxsec3.7},
$$
\varphi(X_{11}+X_{12})-\varphi(X_{11})=P(\varphi(X_{11}+X_{12})-\varphi(X_{11}))Q+Z
$$
for some central element $Z\in\mathcal{Z}_{\mathcal{A}}$. It is
enough to show that $$\varphi(X_{12})=P(\varphi(X_{11}+X_{12})-\varphi(X_{11}))Q.$$
In fact, by Lemma \ref{xxsec3.6},
$$
\begin{aligned}
\varphi(X_{12})&=L(P(X_{11}+X_{12})Q\pm Q(X_{11}+X_{12})P)\\
&=P(\varphi(X_{11}+X_{12}))Q\pm Q(\varphi(X_{11}+X_{12}))P,
\end{aligned}
$$
where the sign depend on $n$ is even or odd. Then $\varphi(X_{12})\in \mathcal{A}_{12}$ and $\varphi(X_{11})\in
\mathcal{A}_{11}+\mathcal{Z_A}$ deduce that
$$
\varphi(X_{12})=P(\varphi(X_{11}+X_{12}))Q=P(\varphi(X_{11}+X_{12})-\varphi(X_{11}))Q.
$$
\end{proof}

\begin{lemma}\label{xxsec3.9}
$\varphi$ is additive on $\mathcal{A}_{12}$ and $\mathcal{A}_{21}$.
\end{lemma}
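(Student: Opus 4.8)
The plan is to prove additivity on $\mathcal{A}_{12}$; the case of $\mathcal{A}_{21}$ is entirely symmetric under interchanging $P$ and $Q$. Fix $A_{12},B_{12}\in\mathcal{A}_{12}$ and set $\Delta=\varphi(A_{12}+B_{12})-\varphi(A_{12})-\varphi(B_{12})$. By Lemma \ref{xxsec3.3} each of the three terms lies in $\mathcal{A}_{12}$, so $\Delta\in\mathcal{A}_{12}$. The goal is $\Delta=0$, and here the faithfulness statement is the right tool: by Lemma \ref{xxsec2.1}(1) (with $i=1,j=2,k=1$) it suffices to show $\Delta M_{21}=0$ for every $M_{21}\in\mathcal{A}_{21}$. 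Note that the naive attempt to produce $\Delta$ itself by feeding $\varphi$ commutators that land back in $\mathcal{A}_{12}$ (e.g. $q_n(X_{12},P,\dots,P,C_{22})=[C_{22},X_{12}]\in\mathcal{A}_{12}$) is circular, since subtracting the three resulting identities only re-expresses the defect $\Delta C_{22}$ in terms of the defect of the rescaled pair $([C_{22},A_{12}],[C_{22},B_{12}])$; this is why I route the argument through $\mathcal{A}_{21}$ instead.

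To expose $\Delta M_{21}$ I would apply the equivalent identity $(1)$ to the three $(n-1)$-th commutators built from $M_{21}$ and a string of copies of $P$. Using $q_n(X_{12},P,\dots,P,M_{21})=[M_{21},X_{12}]$ for $X_{12}\in\mathcal{A}_{12}$, the normalization $\varphi(P)\in\mathcal{Z_A}$ (which kills every intermediate summand of $(1)$, a commutator against a central element being zero), and $\varphi(X_{12})\in\mathcal{A}_{12}$ from Lemma \ref{xxsec3.3}, one obtains
$$\varphi([M_{21},X_{12}])=[M_{21},\varphi(X_{12})]+[\varphi(M_{21}),X_{12}]$$
for each of $X_{12}=A_{12}+B_{12},\,A_{12},\,B_{12}$. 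Subtracting the last two from the first and using bi-additivity of the commutator in its entries, the $\varphi(M_{21})$ contributions cancel and I am left with
$$\varphi(D+E)-\varphi(D)-\varphi(E)=[M_{21},\Delta],\qquad D=[M_{21},A_{12}],\ E=[M_{21},B_{12}],$$
where $D,E,D+E$ are diagonal elements of $\mathcal{A}_{11}+\mathcal{A}_{22}$. By Lemma \ref{xxsec3.6}(2), $\varphi$ maps diagonal elements to diagonal elements, so both sides are diagonal; taking $\mathcal{A}_{11}$-corners gives $P[\varphi(D+E)-\varphi(D)-\varphi(E)]P=-\Delta M_{21}$, because the $\mathcal{A}_{22}$-valued part $M_{21}\Delta$ of $[M_{21},\Delta]$ drops out.

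The crux, and the step I expect to be the main obstacle, is to show that the left-hand side $\varphi(D+E)-\varphi(D)-\varphi(E)$ has vanishing $\mathcal{A}_{11}$-corner. This is an additivity statement for the pair of diagonal elements $(D,E)$, of the same flavor as Lemma \ref{xxsec3.8} but for the decomposition $(\mathcal{A}_{11},\mathcal{A}_{22})$ rather than $(\mathcal{A}_{ii},\mathcal{A}_{ij})$, and it genuinely cannot be obtained by the formal manipulations above: replaying $(1)$ on $D$ and $E$ merely reproduces $[M_{21},\Delta]$ tautologically, while the cleanly $\mathcal{A}_{12}$-valued computations are circular as noted. I would therefore isolate the diagonal additivity as a separate lemma and prove it with the analytic machinery already in play, namely the Kleinecke--Shirokov theorem (Lemma \ref{xxsec2.2}) to eliminate central quasi-nilpotents, the faithfulness and nondegeneracy statements of Lemma \ref{xxsec2.1}(1),(4) to pin down the residual central ambiguity, and the diagonal-preservation of Lemma \ref{xxsec3.6}(2); the $\mathcal{A}_{11}$- and $\mathcal{A}_{22}$-corners are handled separately. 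Once the diagonal defect is shown to have zero $\mathcal{A}_{11}$-corner, the displayed identity yields $\Delta M_{21}=0$ for all $M_{21}$, hence $\Delta=0$ by Lemma \ref{xxsec2.1}(1), and the $P\leftrightarrow Q$ symmetric computation delivers additivity on $\mathcal{A}_{21}$.
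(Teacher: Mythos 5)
Your reductions are correct as far as they go: the identity $\varphi([M_{21},X_{12}])=[M_{21},\varphi(X_{12})]+[\varphi(M_{21}),X_{12}]$ follows from $(1)$ with the normalization $\varphi(P)\in\mathcal{Z_A}$, the subtraction does yield $\varphi(D+E)-\varphi(D)-\varphi(E)=[M_{21},\Delta]$ with $D,E$ diagonal, and Lemma \ref{xxsec2.1}(1) would finish if you knew $\Delta M_{21}=0$ for all $M_{21}$. But the argument has a genuine gap exactly where you flag it: the claim that $\varphi(D+E)-\varphi(D)-\varphi(E)$ has vanishing $\mathcal{A}_{11}$-corner is never proved; you only list tools (Kleinecke--Shirokov, Lemma \ref{xxsec2.1}(1),(4), Lemma \ref{xxsec3.6}(2)) and defer to a hypothetical separate lemma. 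This deferred statement is not a minor technicality --- it is almost-additivity of $\varphi$ on pairs of diagonal elements, which in the paper is the content of Lemmas \ref{xxsec3.10} and \ref{xxsec3.11}, and those are proved \emph{after} and \emph{using} Lemma \ref{xxsec3.9}: their proofs begin by splitting $\varphi(X_{11}Y_{12}-Y_{12}X_{22})=\varphi(X_{11}Y_{12})+\varphi(-Y_{12}X_{22})$, which is precisely additivity on $\mathcal{A}_{12}$. So within the machinery available at this stage, your plan is circular: the ``separate lemma'' you would invoke is downstream of the statement you are trying to prove, and you offer no independent argument for it. (A smaller point: even granting diagonal almost-additivity, the defect is only central, so its $P$-corner is $PZ$, not obviously $0$; one must additionally note that $[M_{21},\Delta]$ central forces $[[M_{21},\Delta],\Delta]=0$, whence $[M_{21},\Delta]$ is central quasi-nilpotent and hence zero by Lemma \ref{xxsec2.2}. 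That repair is easy, but it does not fill the main gap.)

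The paper avoids all of this with a short direct computation that your proposal never touches: since
$$X_{12}+Y_{12}=[P+X_{12},Q+Y_{12}]=p_n(P+X_{12},\,Q+Y_{12},\,Q,\dots,Q),$$
one applies the Lie $n$-derivation rule to this single $(n-1)$-th commutator; the terms where $\varphi$ hits a plain $Q$ die because $\varphi(Q)\in\mathcal{Z_A}$, and Lemma \ref{xxsec3.8} (almost-additivity on $\mathcal{A}_{ii}+\mathcal{A}_{ij}$, already established) lets one replace $\varphi(P+X_{12})$ by $\varphi(P)+\varphi(X_{12})$ and $\varphi(Q+Y_{12})$ by $\varphi(Q)+\varphi(Y_{12})$ modulo central terms that vanish inside the commutators. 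Together with $\varphi(X_{12}),\varphi(Y_{12})\in\mathcal{A}_{12}$ this collapses the expansion to $\varphi(X_{12})+\varphi(Y_{12})$. If you want to salvage your route, the missing diagonal lemma must be proved from Lemma \ref{xxsec3.8} and the corner lemmas alone, without appealing to additivity on $\mathcal{A}_{12}$; as written, your proposal does not do this, so it does not constitute a proof.
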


\begin{proof}
Let $X_{12}, Y_{12}\in\mathcal{M}_{12}$. Since
$$X_{12}+Y_{12}=[P+X_{12},Q+Y_{12}]=[\cdots[[[P+X_{12},Q+Y_{12}],Q],Q],\cdots,Q],$$
we have from Lemma \ref{xxsec3.8}
$$\begin{aligned}
&\varphi(X_{12}+Y_{12})\\
=&[\cdots[[[\varphi(P+X_{12}),Q+Y_{12}],Q],Q],\cdots,Q]\\
&+[\cdots[[[P+X_{12},\varphi(Q+Y_{12})],Q],Q],\cdots,Q]\\
=&[\cdots[[[\varphi(P)+\varphi(X_{12}),Q+Y_{12}],Q],Q],\cdots,Q]\\
&+[\cdots[[[P+X_{12},\varphi(Q)+\varphi(Y_{12})],Q],Q],\cdots,Q]\\
=&\varphi(X_{12})+\varphi(Y_{12}).
\end{aligned}$$
Similarly, $\varphi$ is additive on $\mathcal{A}_{21}$.
\end{proof}

\begin{lemma}\label{xxsec3.10}
$\varphi(X_{11}+X_{22})-\varphi(X_{11})-\varphi(X_{22})\in \mathcal{Z}_{\mathcal{A}}$.
\end{lemma}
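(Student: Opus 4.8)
The plan is to show that the ``diagonal defect'' $W:=\varphi(X_{11}+X_{22})-\varphi(X_{11})-\varphi(X_{22})$ is central by testing it against all off-diagonal elements and invoking Lemma \ref{xxsec2.1}(2). Write $D=X_{11}+X_{22}$. First I would pin down where $\varphi(D)$ lives: since $D$ commutes with $P$, we have $p_n(D,P,\dots,P)=0$, and applying $\varphi$ (all filler terms carrying $\varphi(P)\in\mathcal{Z}_{\mathcal{A}}$ drop out) gives $[\cdots[[\varphi(D),P],P],\cdots,P]=0$; reading off Peirce components exactly as in Lemma \ref{xxsec3.5} forces the off-diagonal parts of $\varphi(D)$ to vanish, so $\varphi(D)=D_{11}+D_{22}$ with $D_{ii}\in\mathcal{A}_{ii}$. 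Combined with $\varphi(X_{11})=A_{11}+Z_1$ and $\varphi(X_{22})=B_{22}+Z_2$ supplied by Lemma \ref{xxsec3.5}, the problem reduces to proving that $U_{11}+V_{22}:=(D_{11}-A_{11})+(D_{22}-B_{22})$ lies in $\mathcal{Z}_{\mathcal{A}}$, because $W=(U_{11}+V_{22})-(Z_1+Z_2)$.

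The heart of the argument is a single well-chosen commutator. For $M_{12}\in\mathcal{A}_{12}$ note that $[D,M_{12}]=X_{11}M_{12}-M_{12}X_{22}\in\mathcal{A}_{12}$, and since bracketing an element of $\mathcal{A}_{12}$ with $Q$ reproduces it, the $n$-fold expression collapses: $p_n(D,M_{12},Q,\dots,Q)=X_{11}M_{12}-M_{12}X_{22}$. Applying $\varphi$, every term in which $\varphi$ hits one of the trailing $Q$'s vanishes (as $\varphi(Q)\in\mathcal{Z}_{\mathcal{A}}$), leaving
$$
\varphi(X_{11}M_{12}-M_{12}X_{22})=D_{11}M_{12}-M_{12}D_{22}+X_{11}\varphi(M_{12})-\varphi(M_{12})X_{22}.
$$
Running the same collapse separately for $X_{11}$ (using $[X_{11},M_{12}]=X_{11}M_{12}$) and for $X_{22}$ (using $[X_{22},M_{12}]=-M_{12}X_{22}$), and then using the additivity of $\varphi$ on $\mathcal{A}_{12}$ (Lemma \ref{xxsec3.9}) to split the left-hand side as $\varphi(X_{11}M_{12})+\varphi(-M_{12}X_{22})$, the terms $X_{11}\varphi(M_{12})$ and $\varphi(M_{12})X_{22}$ cancel and I am left with $U_{11}M_{12}=M_{12}V_{22}$ for every $M_{12}$; equivalently $[U_{11}+V_{22},M_{12}]=0$.

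I would then run the mirror-image computation with $M_{21}\in\mathcal{A}_{21}$, now filling the $(n-1)$-commutator with $P$'s (so that $p_n(D,M_{21},P,\dots,P)=X_{22}M_{21}-M_{21}X_{11}$) and using additivity on $\mathcal{A}_{21}$, to obtain $V_{22}M_{21}=M_{21}U_{11}$, i.e. $[U_{11}+V_{22},M_{21}]=0$. Since $U_{11}+V_{22}$ now commutes with every element of $\mathcal{A}_{12}$ and of $\mathcal{A}_{21}$, Lemma \ref{xxsec2.1}(2) gives $U_{11}+V_{22}\in\mathcal{Z}_{\mathcal{A}}$, whence $W\in\mathcal{Z}_{\mathcal{A}}$, as desired. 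The step I expect to be most delicate is the very first one: verifying that all the ``filler'' commutators carrying $\varphi(P)$ or $\varphi(Q)$ genuinely vanish for every parity of $n$, and that the chosen fillers ($Q$ in the $\mathcal{A}_{12}$ case, $P$ in the $\mathcal{A}_{21}$ case) really do leave $[D,M_{12}]$, respectively $[D,M_{21}]$, unchanged. This is exactly what lets the $n$-linear Lie identity collapse to an ordinary associative relation, after which the conclusion is pure bookkeeping in the Peirce components.
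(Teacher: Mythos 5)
Your proof is correct, and it reaches the conclusion by a genuinely different mechanism than the paper, even though the two share the same starting point: both use additivity on $\mathcal{A}_{12}$ (Lemma \ref{xxsec3.9}) together with the collapse $p_n(\,\cdot\,,Y_{12},Q,\dots,Q)$ to expand $\varphi(X_{11}Y_{12}-Y_{12}X_{22})$ term by term. Where the paper and you diverge is in how the defect is then forced into the center. The paper cannot apply the $n$-derivation identity to $[X_{11}+X_{22},Y_{12}]$ alone, so it wraps the identity inside an arbitrary outer factor $X$, deduces $[X,[W,Y_{12}]]=0$ for all $X$, invokes the Kleinecke--Shirokov theorem (a central quasi-nilpotent element vanishes) to get $[W,Y_{12}]=0$, then cites \cite[Lemma 2]{BaiDu} to place $W\in\mathcal{A}_{12}+\mathcal{Z}_{\mathcal{A}}$, and finally kills the residual $\mathcal{A}_{12}$-component using Lemma \ref{xxsec3.6}(2) and Lemma \ref{xxsec3.5}. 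You avoid all three of these ingredients: having first extended the argument of Lemma \ref{xxsec3.5} to show that $\varphi(X_{11}+X_{22})$ is diagonal (legitimate, since that argument only uses $[X_{11}+X_{22},P]=0$), you apply the $n$-derivation property directly to $p_n(X_{11}+X_{22},M_{12},Q,\dots,Q)$ --- no outer wrap is needed because this is already an $n$-entry commutator --- and comparison of Peirce components yields $[U_{11}+V_{22},M_{12}]=0$ outright, with no appeal to quasi-nilpotence. The price is that Lemma \ref{xxsec2.1}(2) needs commutation with both off-diagonal corners, so you must also run the mirror computation with $M_{21}$ and $P$-fillers (which Lemmas \ref{xxsec3.3} and \ref{xxsec3.9} do support); the paper's single $\mathcal{A}_{12}$ test suffices only because BaiDu's structural lemma is stronger. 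Your flagged delicate points all check out: $[\,\cdot\,,Q]$ fixes $\mathcal{A}_{12}$ and $[\,\cdot\,,P]$ fixes $\mathcal{A}_{21}$ (so the fillers cause no parity problems, unlike $P$-brackets on $\mathcal{A}_{12}$), and every term where $\varphi$ lands on a filler dies because $\varphi(P),\varphi(Q)\in\mathcal{Z}_{\mathcal{A}}$ annihilate the bracket. The net effect is a more self-contained proof, trading the external citation and the Kleinecke--Shirokov theorem for one extra symmetric computation.
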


\begin{proof}
Clearly $[X_{11}+X_{22},Y_{12}]=X_{11}Y_{12}-Y_{12}X_{22}$ for any
$Y_{12}\in \mathcal{A}_{12}$. By Lemma \ref{xxsec3.9},
$$\begin{aligned}
&\varphi(X_{11}Y_{12}-Y_{12}X_{22})=\varphi(X_{11}Y_{12}))+\varphi(-Y_{12}X_{22})\\
=& \varphi([\cdots[[[X_{11},Y_{12}],Q],Q],\cdots,Q])+\varphi([\cdots[[[X_{22},Y_{12}],Q],Q],\cdots,Q])\\
=&[\cdots[[[\varphi(X_{11}),Y_{12}],Q],Q],\cdots,Q]+[\cdots[[[X_{11},\varphi(Y_{12})],Q],Q],\cdots,Q]\\
&+[\cdots[[[\varphi(X_{22}),Y_{12}],Q],Q],\cdots,Q]+[\cdots[[[X_{22},\varphi(Y_{12})],Q],Q],\cdots,Q]\\
=&[\varphi(X_{11})+\varphi(X_{22}),Y_{12}]+[X_{11}+X_{22},\varphi(Y_{12})].
\end{aligned}\eqno(5)
$$
On the other hand, there is
$$[X,[X_{11}+X_{22},[P,\cdots,[P,Y_{12}]\cdots]]]=[X,[P,[P,\cdots,[P,X_{11}Y_{12}-Y_{12}X_{22}]\cdots]]]$$
for all $X\in\mathcal{A}$. Applying $\varphi$ to the above identity, we have from Eq. (5)
$$\begin{aligned}
&[X,[\varphi(X_{11}+X_{22}),Y_{12}]]+[X,[X_{11}+X_{22},\varphi(Y_{12})]]\\
=&[X,[P,[P,\cdots,[P,\varphi(X_{11}Y_{12}-Y_{12}X_{22})]\cdots]]]\\
=&[X,\varphi(X_{11}Y_{12}-Y_{12}X_{22})]\\
=&[X,[\varphi(X_{11})+\varphi(X_{22}),Y_{12}]]+[X,[X_{11}+X_{22},\varphi(Y_{12})]].
 \end{aligned}$$
Hence $[X,[\varphi(X_{11}+X_{22})-\varphi(X_{11})-\varphi(X_{22}), Y_{12}]]=0$.
It follows that $[\varphi(X_{11}+X_{22})-\varphi(X_{11})-\varphi(X_{22}), Y_{12}]$ is central
quasi-nilpotent and hence is zero. From \cite[Lemma 2]{BaiDu},
$\varphi(X_{11}+X_{22})-\varphi(X_{11})-\varphi(X_{22})\in
 \mathcal{A}_{12}+\mathcal{Z}_{\mathcal{A}}$.

However, since $P(X_{11}+X_{22})Q=0$, we have
$P(\varphi(X_{11}+X_{22}))Q=0$ by Lemma \ref{xxsec3.6} (2). So Lemma \ref{xxsec3.5} implies
$$L(X_{11}+X_{22})-L(X_{11})-L(X_{22})\in
\mathcal{Z}_{\mathcal{M}}.$$
\end{proof}

\begin{lemma}\label{xxsec3.11}
$\varphi(X_{ii}+Y_{ii})-\varphi(X_{ii})-\varphi(Y_{ii})\in \mathcal{Z_A}$ for $i=1,2$.
\end{lemma}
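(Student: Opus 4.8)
The plan is to mimic the argument of Lemma \ref{xxsec3.10}, replacing the pair $(X_{11},X_{22})$ living in different diagonal corners by a pair $X_{ii},Y_{ii}$ lying in the \emph{same} corner, and to use the off-diagonal corner $\mathcal{A}_{12}$ (resp.\ $\mathcal{A}_{21}$) as a probe. I only treat $i=1$; the case $i=2$ is symmetric, with $\mathcal{A}_{21}$ in place of $\mathcal{A}_{12}$. Write $D=\varphi(X_{11}+Y_{11})-\varphi(X_{11})-\varphi(Y_{11})$. By Lemma \ref{xxsec3.5} each of $\varphi(X_{11}+Y_{11}),\varphi(X_{11}),\varphi(Y_{11})$ lies in $\mathcal{A}_{11}+\mathcal{Z_A}$, hence $D\in\mathcal{A}_{11}+\mathcal{Z_A}$. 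It therefore suffices to kill the $\mathcal{A}_{11}$-component of $D$, and for this I will prove $[D,Z_{12}]=0$ for every $Z_{12}\in\mathcal{A}_{12}$.

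First I would record the analogue of Eq.~(5) from the proof of Lemma \ref{xxsec3.10}. For $Z_{12}\in\mathcal{A}_{12}$ we have $[X_{11}+Y_{11},Z_{12}]=X_{11}Z_{12}+Y_{11}Z_{12}$, both summands lying in $\mathcal{A}_{12}$, so Lemma \ref{xxsec3.9} gives $\varphi((X_{11}+Y_{11})Z_{12})=\varphi(X_{11}Z_{12})+\varphi(Y_{11}Z_{12})$. Since $X_{11}Z_{12}=p_n(X_{11},Z_{12},Q,\cdots,Q)$ with $n-2$ copies of $Q$, applying $\varphi$ and discarding the $Q$-slots (which contribute $\varphi(Q)\in\mathcal{Z_A}$ by Lemma \ref{xxsec3.4}, hence kill their commutators) yields $\varphi(X_{11}Z_{12})=[\varphi(X_{11}),Z_{12}]+[X_{11},\varphi(Z_{12})]$, and likewise for $Y_{11}$. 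Adding,
$$\varphi((X_{11}+Y_{11})Z_{12})=[\varphi(X_{11})+\varphi(Y_{11}),Z_{12}]+[X_{11}+Y_{11},\varphi(Z_{12})].$$

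Next I would exploit, for arbitrary $W\in\mathcal{A}$, the identity
$$[W,[X_{11}+Y_{11},[P,\cdots,[P,Z_{12}]\cdots]]]=[W,[P,\cdots,[P,(X_{11}+Y_{11})Z_{12}]\cdots]],$$
both sides equalling $[W,(X_{11}+Y_{11})Z_{12}]$ and both being genuine $(n-1)$-th commutators, to which the $q_n$-form $(1)$ of a Lie $n$-derivation applies. Expanding $\varphi$ on each side, using $\varphi(P)\in\mathcal{Z_A}$ to annihilate the $P$-slots and $\varphi(Z_{12})\in\mathcal{A}_{12}$ (Lemma \ref{xxsec3.3}), the two expansions share the outermost-slot term $[\varphi(W),(X_{11}+Y_{11})Z_{12}]$; after cancelling it and substituting the displayed product formula for $\varphi((X_{11}+Y_{11})Z_{12})$, everything cancels except $[W,[D,Z_{12}]]=0$, valid for all $W\in\mathcal{A}$. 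Finally, taking $W=Z_{12}$ gives $[[D,Z_{12}],Z_{12}]=0$, so $[D,Z_{12}]$ is quasi-nilpotent by Lemma \ref{xxsec2.2}; since $[W,[D,Z_{12}]]=0$ for all $W$ forces $[D,Z_{12}]\in\mathcal{Z_A}$, this central quasi-nilpotent element is zero. Writing $D=d_{11}+z$ with $d_{11}\in\mathcal{A}_{11}$ and $z\in\mathcal{Z_A}$, the relation $[D,Z_{12}]=d_{11}Z_{12}=0$ for all $Z_{12}\in\mathcal{A}_{12}$ gives $d_{11}=0$ by Lemma \ref{xxsec2.1}(1), whence $D\in\mathcal{Z_A}$, as required.

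The only delicate point is the bookkeeping in the two $\varphi$-expansions: one must verify that precisely the $\varphi(P)$- and $\varphi(Q)$-contributions vanish (centrality) and that the $\varphi(W)$-contributions coincide between the two sides and cancel. Everything else follows the recipe already used in the preceding lemmas, namely rewriting products as iterated commutators together with the Kleinecke--Shirokov/centrality dichotomy of Lemma \ref{xxsec2.2}.
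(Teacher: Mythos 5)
Your proof is correct and follows essentially the same route as the paper's: the same product formula (your displayed identity is exactly the paper's Eq.~(6)), the same $q_n$-identity with an arbitrary outer element $W$, and the same central-quasi-nilpotent argument via Lemma~\ref{xxsec2.2} to conclude $[D,Z_{12}]=0$ for all $Z_{12}\in\mathcal{A}_{12}$. The only divergence is the endgame: the paper invokes \cite[Lemma 2]{BaiDu} to place $D$ in $\mathcal{A}_{12}+\mathcal{Z}_{\mathcal{A}}$ and then removes the off-diagonal component using Lemma~\ref{xxsec3.6}(2) and Lemma~\ref{xxsec3.5}, whereas you observe directly from Lemma~\ref{xxsec3.5} that $D\in\mathcal{A}_{11}+\mathcal{Z}_{\mathcal{A}}$ and kill the $\mathcal{A}_{11}$-component with Lemma~\ref{xxsec2.1}(1) --- a marginally more self-contained finish, but the same argument in substance.
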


\begin{proof}
We only prove the case $i=1$ and the other case can be proved similarly. For $Y_{12}\in\mathcal{A}_{12}$,
$[X_{11}+Y_{11},Y_{12}]=X_{11}Y_{12}+Y_{11}Y_{12}$. By Lemma \ref{xxsec3.9},
$$\begin{aligned}
&\varphi(X_{11}Y_{12}+Y_{11}Y_{12})=\varphi(X_{11}Y_{12}))+\varphi(Y_{11}Y_{12})\\
=&\varphi([\cdots[[[X_{11},Y_{12}],Q],Q],\cdots,Q])+\varphi([\cdots[[[Y_{11},Y_{12}],Q],Q],\cdots,Q])\\
=&[\varphi(X_{11}),Y_{12}]+[X_{11},\varphi(Y_{12})]+[\varphi(Y_{11}),Y_{12}]+[Y_{11},\varphi(Y_{12})]\\
=&[\varphi(X_{11})+\varphi(Y_{11}),Y_{12}]+[X_{11}+Y_{11},\varphi(Y_{12})].
\end{aligned}\eqno(6)
$$
On the other hand, there is
$$[X,[X_{11}+Y_{11},[P,\cdots,[P,Y_{12}]\cdots]]]=[X,[P,[P,\cdots,[P,X_{11}Y_{12}+Y_{11}Y_{12}]\cdots]]]$$
for all $X\in\mathcal{A}$. Applying $\varphi$ to the above identity, we have from Eq. (6)
$$\begin{aligned}
&[X,[\varphi(X_{11}+Y_{11}),Y_{12}]]+[X,[X_{11}+Y_{11},\varphi(Y_{12})]]\\
=&[X,[P,[P,\cdots,[P,\varphi(X_{11}Y_{12}+Y_{11}Y_{12})]\cdots]]]\\
=&[X,\varphi(X_{11}Y_{12}+Y_{11}Y_{12})]\\
=&[X,[\varphi(X_{11})+\varphi(Y_{11}),Y_{12}]]+[X,[X_{11}+Y_{11},\varphi(Y_{12})]].
\end{aligned}
$$
Hence $[X,[\varphi(X_{11}+Y_{11})-\varphi(X_{11})-\varphi(Y_{11}), Y_{12}]]=0$.
It follows that $[\varphi(X_{11}+Y_{11})-\varphi(X_{11})-\varphi(Y_{11}), Y_{12}]$ is central
quasi-nilpotent and hence is zero. From Lemma \cite[Lemma 2]{BaiDu},
$\varphi(X_{11}+Y_{11})-\varphi(X_{11})-\varphi(Y_{11})\in
 \mathcal{A}_{12}+\mathcal{Z}_{\mathcal{A}}$.

However, since $P(X_{11}+Y_{11})Q=0$, we have
$P(\varphi(X_{11}+Y_{11}))Q=0$ by Lemma \ref{xxsec3.6} (2). So Lemma \ref{xxsec3.5} implies
$$\varphi(X_{11}+Y_{11})-\varphi(X_{11})-\varphi(Y_{11})\in\mathcal{Z}_{\mathcal{A}}.$$
\end{proof}

\begin{lemma}\label{xxsec3.12}
$\varphi(X_{ii}+X_{jj}+X_{ij})-\varphi(X_{ii})-\varphi(X_{jj})-\varphi(X_{ij})\in
\mathcal{Z}_{\mathcal{A}}$.
\end{lemma}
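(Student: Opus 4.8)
The plan is to imitate the proof of Lemma~\ref{xxsec3.8}, replacing the single diagonal block used there by the full diagonal element $X_{11}+X_{22}$, and then to absorb the resulting diagonal discrepancy through Lemma~\ref{xxsec3.10}. I treat the case $i=1$, $j=2$; the case $i=2$, $j=1$ is entirely symmetric. Throughout write $D=X_{11}+X_{22}$ and $W=D+X_{12}$.

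The first step is to apply Lemma~\ref{xxsec3.7}~(2) to the element $X=D$ together with the off-diagonal part $X_{12}$. This is legitimate precisely because Lemma~\ref{xxsec3.7} is stated for an \emph{arbitrary} $X\in\mathcal{A}$, not merely for a single matrix block. It yields
$$
\varphi(W)-\varphi(D)=P\bigl(\varphi(W)-\varphi(D)\bigr)Q+Z_{0}
$$
for some $Z_{0}\in\mathcal{Z}_{\mathcal{A}}$; in other words, modulo the center, $\varphi(W)-\varphi(D)$ lives entirely in the $(1,2)$-corner.

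The crux is to recognize that corner as $\varphi(X_{12})$. Since $PWQ=X_{12}$ and $QWP=0$, Lemma~\ref{xxsec3.6}~(1) gives $\varphi(X_{12})=P\varphi(W)Q\pm Q\varphi(W)P$, and Lemma~\ref{xxsec3.6}~(2) (applied to $QWP=0$) annihilates the second summand, so $P\varphi(W)Q=\varphi(X_{12})$. Likewise $PDQ=0$ forces $P\varphi(D)Q=0$ by Lemma~\ref{xxsec3.6}~(2). Subtracting, $P\bigl(\varphi(W)-\varphi(D)\bigr)Q=\varphi(X_{12})$, whence
$$
\varphi(W)-\varphi(D)-\varphi(X_{12})=Z_{0}\in\mathcal{Z}_{\mathcal{A}}.
$$

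Finally I invoke Lemma~\ref{xxsec3.10}, which asserts $\varphi(D)-\varphi(X_{11})-\varphi(X_{22})\in\mathcal{Z}_{\mathcal{A}}$. Adding this to the previous display gives $\varphi(X_{11}+X_{22}+X_{12})-\varphi(X_{11})-\varphi(X_{22})-\varphi(X_{12})\in\mathcal{Z}_{\mathcal{A}}$, as required. I expect no serious obstacle here: all of the analytic work (the Kleinecke--Shirokov argument forcing central quasi-nilpotents to vanish, and the reduction to a $(1,2)$-corner modulo the center) has already been carried out inside Lemmas~\ref{xxsec3.7} and~\ref{xxsec3.10}, so the only delicate point is the parity bookkeeping in the corner identification — making sure that both the $(2,1)$-corner of $\varphi(W)$ and the $(1,2)$-corner of $\varphi(D)$ are killed by Lemma~\ref{xxsec3.6}~(2) regardless of whether $n$ is even or odd.
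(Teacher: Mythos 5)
Your proof is correct and follows essentially the same route as the paper's: apply Lemma~\ref{xxsec3.7}~(2) to the diagonal part $X_{11}+X_{22}$, identify the resulting $(1,2)$-corner as $\varphi(X_{12})$ via Lemma~\ref{xxsec3.6}, and merge the diagonal terms with Lemma~\ref{xxsec3.10}. The only difference is cosmetic — you invoke Lemma~\ref{xxsec3.10} at the end rather than at the start, and you make explicit the appeal to Lemma~\ref{xxsec3.6}~(2) for $P\varphi(X_{11}+X_{22})Q=0$, which the paper uses tacitly.
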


\begin{proof}
We only prove the case for $i=1,j=2$. From Lemmas \ref{xxsec3.10} and \ref{xxsec3.7}, we have
$$\begin{aligned}
& \varphi(X_{11}+X_{22}+X_{12})-\varphi(X_{11})-\varphi(X_{22})\\
=& \varphi(X_{11}+X_{22}+X_{12})-\varphi(X_{11}+X_{22})+Z_0\\
=& P(\varphi(X_{11}+X_{22}+X_{12})-\varphi(X_{11}+X_{22}))Q+Z\\
=& P\varphi(X_{11}+X_{22}+X_{12})Q+Z\in\mathcal{A}_{12}+\mathcal{Z}_{\mathcal{A}}.
\end{aligned}$$
for some central element $Z_0,Z\in\mathcal{Z}_{\mathcal{A}}$. Lemma \ref{xxsec3.6} shows that
$$\begin{aligned}
\varphi(X_{12})=&\varphi(P(X_{11}+X_{22}+X_{12})Q\pm Q(X_{11}+X_{22}+X_{12})P)\\
=&P\varphi(X_{11}+X_{22}+X_{12})Q\pm Q\varphi(X_{11}+X_{22}+X_{12})P\\
=&P\varphi(X_{11}+X_{22}+X_{12})Q.
\end{aligned}$$
Therefore
$$\varphi(X_{11}+X_{22}+X_{12})-\varphi(X_{11})-\varphi(X_{22})-\varphi(X_{12})=Z\in\mathcal{Z}_{\mathcal{A}}.$$
\end{proof}

\begin{lemma}\label{xxsec3.13}
$\varphi$ is almost additive on $\mathcal{A}$, i.e., for all
$X,Y\in\mathcal{A}$, $\varphi(X+Y)-\varphi(X)-\varphi(Y)\in
\mathcal{Z}_{\mathcal{A}}$.
\end{lemma}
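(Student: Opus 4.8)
The plan is to first establish that $\varphi$ respects the Peirce decomposition modulo the center, and then to deduce the general statement from this. Writing $X=X_{11}+X_{12}+X_{21}+X_{22}$ with $X_{11}=PXP$, $X_{12}=PXQ$, $X_{21}=QXP$, $X_{22}=QXQ$, I would aim for
$$
\varphi(X)\equiv\varphi(X_{11})+\varphi(X_{12})+\varphi(X_{21})+\varphi(X_{22})\pmod{\mathcal{Z}_{\mathcal{A}}}\qquad(\ast)
$$
for every $X\in\mathcal{A}$. Granting $(\ast)$, the lemma follows at once: since $(X+Y)_{ij}=X_{ij}+Y_{ij}$, applying $(\ast)$ to $X$, $Y$ and $X+Y$ gives
$$
\varphi(X+Y)-\varphi(X)-\varphi(Y)\equiv\sum_{i,j}\bigl(\varphi(X_{ij}+Y_{ij})-\varphi(X_{ij})-\varphi(Y_{ij})\bigr)\pmod{\mathcal{Z}_{\mathcal{A}}},
$$
where the off-diagonal terms ($i\neq j$) vanish by Lemma \ref{xxsec3.9} and the diagonal terms ($i=j$) are central by Lemma \ref{xxsec3.11}.

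The hard part is $(\ast)$, and the idea I would use is to peel off the last off-diagonal block in two different orders and compare. Set $B=X_{11}+X_{22}+X_{12}$ and $B'=X_{11}+X_{22}+X_{21}$, so that $X=B+X_{21}=B'+X_{12}$. Adding $X_{21}$ to $B$ via the $\mathcal{A}_{21}$-analogue of Lemma \ref{xxsec3.7}, the difference $\varphi(X)-\varphi(B)$ lies in $\mathcal{A}_{21}+\mathcal{Z}_{\mathcal{A}}$ with $\mathcal{A}_{21}$-component $Q\varphi(X)P-Q\varphi(B)P$; since $QBP=0$, Lemma \ref{xxsec3.6}(2) forces $Q\varphi(B)P=0$, and Lemma \ref{xxsec3.12} (with $i=1,j=2$) gives $\varphi(B)\equiv\varphi(X_{11})+\varphi(X_{22})+\varphi(X_{12})$, whence
$$
\varphi(X)\equiv\varphi(X_{11})+\varphi(X_{22})+\varphi(X_{12})+Q\varphi(X)P\qquad(\mathrm{I}).
$$
The symmetric computation with $B'$, using Lemma \ref{xxsec3.7} itself together with Lemma \ref{xxsec3.12} for $i=2,j=1$, yields
$$
\varphi(X)\equiv\varphi(X_{11})+\varphi(X_{22})+\varphi(X_{21})+P\varphi(X)Q\qquad(\mathrm{II}).
$$

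Subtracting $(\mathrm{I})$ from $(\mathrm{II})$ shows that $\bigl(\varphi(X_{12})-P\varphi(X)Q\bigr)-\bigl(\varphi(X_{21})-Q\varphi(X)P\bigr)\in\mathcal{Z}_{\mathcal{A}}$. By Lemma \ref{xxsec3.3} the first parenthesis lies in $\mathcal{A}_{12}$ and the second in $\mathcal{A}_{21}$, and any central element of $\mathcal{A}_{12}+\mathcal{A}_{21}$ is zero because it must commute with $P$; hence both parentheses vanish, giving $P\varphi(X)Q=\varphi(X_{12})$ and $Q\varphi(X)P=\varphi(X_{21})$. Feeding this back into $(\mathrm{I})$ produces exactly $(\ast)$. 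The essential obstacle, and the reason for the two-way comparison, is the identification of the two off-diagonal corners $P\varphi(X)Q$ and $Q\varphi(X)P$ of $\varphi(X)$: adding a single off-diagonal block at a time keeps every commutator inside one Peirce component so that Lemmas \ref{xxsec3.6}, \ref{xxsec3.7} and \ref{xxsec3.12} apply cleanly, and comparing the two orders pins both corners down simultaneously, avoiding the need for a separate additivity result linking $\mathcal{A}_{12}$ and $\mathcal{A}_{21}$. Everything after $(\ast)$ is routine bookkeeping.
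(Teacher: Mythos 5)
Your proposal is correct, and it shares the paper's overall skeleton: the same reduction of the lemma to your congruence $(\ast)$ via Lemmas \ref{xxsec3.9} and \ref{xxsec3.11}, and the same key ingredients, namely Lemmas \ref{xxsec3.7} and \ref{xxsec3.12} (each also needed in its $P\leftrightarrow Q$ symmetric form, which is legitimate because all hypotheses on $P$ and $Q$ are symmetric and $\varphi(Q)\in\mathcal{Z}_{\mathcal{A}}$ by Lemma \ref{xxsec3.4}; the paper itself leans on such analogues when it writes ``similarly''). Where you genuinely diverge is in the mechanism used to prove $(\ast)$. The paper never forms your identities $(\mathrm{I})$ and $(\mathrm{II})$: it instead shows directly that the defect $D=\varphi(X)-\sum_{i,j}\varphi(X_{ij})$ commutes with every $Y_{12}$ and every $Y_{21}$ --- dropping $[\varphi(X_{12}),Y_{12}]=0$ (both factors lie in $\mathcal{A}_{12}$ by Lemma \ref{xxsec3.3}), absorbing $\varphi(X_{11})+\varphi(X_{21})+\varphi(X_{22})$ into $\varphi(X_{11}+X_{21}+X_{22})$ by Lemma \ref{xxsec3.12}, and killing the remaining commutator by Lemma \ref{xxsec3.7}(1) --- and then concludes $D\in\mathcal{Z}_{\mathcal{A}}$ by Lemma \ref{xxsec2.1}(2). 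You use instead the structural form, Lemma \ref{xxsec3.7}(2), combined with Lemma \ref{xxsec3.6}(2) (which the paper's proof of this lemma does not touch), to express $\varphi(X)$ modulo the center in two ways, and you finish with the elementary fact that a central element of $\mathcal{A}_{12}+\mathcal{A}_{21}$ vanishes, avoiding Lemma \ref{xxsec2.1}(2) altogether. The paper's route is shorter and more uniform (one commutation computation per off-diagonal corner, one appeal to Lemma \ref{xxsec2.1}(2)); yours is longer in bookkeeping but yields strictly more information along the way, namely the exact identification $P\varphi(X)Q=\varphi(X_{12})$ and $Q\varphi(X)P=\varphi(X_{21})$ of the off-diagonal corners of $\varphi(X)$, which the paper's argument never makes explicit.
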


\begin{proof}
By Lemma \ref{xxsec3.9} and Lemma \ref{xxsec3.11}, we only need to prove
$$\varphi(X_{11}+X_{12}+X_{21}+X_{22})-\varphi(X_{11})-\varphi(X_{12})-\varphi(X_{21})-\varphi(X_{22})\in
\mathcal{Z}_{\mathcal{A}},$$
where $X_{ij}\in\mathcal{A}_{ij}$.

From Lemma \ref{xxsec3.12} and Lemma \ref{xxsec3.7}, we have
$$\begin{aligned}
&[\varphi(X_{11}+X_{12}+X_{21}+X_{22})-\varphi(X_{11})-\varphi(X_{12})-\varphi(X_{21})-\varphi(X_{22}),Y_{12}]\\
&=[\varphi(X_{11}+X_{12}+X_{21}+X_{22})-\varphi(X_{11})-\varphi(X_{21})-\varphi(X_{22}),Y_{12}]\\
&=[\varphi(X_{11}+X_{12}+X_{21}+X_{22})-\varphi(X_{11}+X_{21}+X_{22}),Y_{12}]\\
&=0
\end{aligned}$$
for all $Y_{12}\in\mathcal{A}_{12}$.
Similarly, we have
$$[\varphi(X_{11}+X_{12}+X_{21}+X_{22})-\varphi(X_{11})-\varphi(X_{12})-\varphi(X_{21})-\varphi(X_{22}),Y_{21}]=0$$
for all $Y_{21}\in\mathcal{A}_{21}$. Now Lemma \ref{xxsec2.1} (2)
asserts that
$$\varphi(X_{11}+X_{12}+X_{21}+X_{22})-\varphi(X_{11})-\varphi(X_{12})-\varphi(X_{21})-\varphi(X_{22})\in
\mathcal{Z}_{\mathcal{A}}.$$
\end{proof}

Now we are at the position to prove our main result.
\vspace{6pt}

{\noindent {\bf Proof of Theorem 2.3.}}
From Lemma \ref{xxsec3.3} and Lemma \ref{xxsec3.5}, we know that if
$X_{ij}\in \mathcal{A}_{ij}$ with $i\neq j$, then
$\varphi(X_{ij})=Y_{ij}\in\mathcal{A}_{ij}$; if $X_{ii}\in
\mathcal{A}_{ii}$, then $\varphi(X_{ii})=Y_{ii}+Z$, where
$Y_{ii}\in\mathcal{A}_{ii}$ and $Z\in \mathcal{Z}_{\mathcal{A}}$ are
unique determined by Lemma \ref{xxsec2.1} (3).
Therefore it is reasonable to
define a mapping $d:\mathcal{A}\rightarrow \mathcal{A}$ by
$d(X_{11}+X_{12}+X_{21}+X_{22})=Y_{11}+Y_{12}+Y_{21}+Y_{22}$. It is
clear that $\varphi(X)-d(X)\in\mathcal{Z}_{\mathcal{A}}$, so we can define
a mapping $f:\mathcal{A}\rightarrow\mathcal{Z}_{\mathcal{A}}$ by
$f(X)=\varphi(X)-d(X)$.

\textbf{Step 1.} We prove that $d$ is additive.

By Lemma \ref{xxsec3.10}, $d$ is additive on $\mathcal{A}_{12}$ and
$\mathcal{A}_{21}$. We claim that $d$ is also additive on
$\mathcal{A}_{ii},i=1,2$. In fact, for any
$X_{ii},Y_{ii}\in\mathcal{M}_{ii}$, we have
$$\begin{aligned}
&d(X_{ii}+Y_{ii})-d(X_{ii})-d(Y_{ii})\\
=&\varphi(X_{ii}+Y_{ii})-f(X_{ii}+Y_{ii})-\varphi(X_{ii})+f(X_{ii})-\varphi(Y_{ii})+f(Y_{ii})\in\mathcal{A}_{ii}\cap\mathcal{Z}_{\mathcal{A}}.
\end{aligned}$$
By Lemma \ref{xxsec2.1} (3), we get
$d(X_{ii}+Y_{ii})-d(X_{ii})-d(Y_{ii})=0.$

Assume that $X=\sum_{1\leq i,j\leq 2}X_{ij}$, $Y=\sum_{1\leq
i,j\leq2}Y_{ij}$, where $X_{ij},Y_{ij}\in\mathcal{A}_{ij}$. By
definition of $d$, we have
$$\begin{aligned}
d(X+Y)&=d(\sum_{1\leq i,j\leq2}(X_{ij}+Y_{ij}))=\sum_{1\leq i,j\leq2}d(X_{ij}+Y_{ij}) \\
&=\sum_{1\leq i,j\leq2}(d(X_{ij})+d(Y_{ij}))=d(X)+d(Y).
\end{aligned}$$

\textbf{Step 2.} We prove that $d$ is a derivation.

Assume that $X_{ij},Y_{ij},M_{ij}\in\mathcal{A}_{ij}$ with $1\leq
i,j\leq 2$. If $i\neq j$, we have
$$\begin{aligned}
d(X_{ii}Y_{ij})&=\varphi(X_{ii}Y_{ij})=\varphi([\cdots[[X_{ii},Y_{ij}],P_{j}],\cdots,P_{j}])\\
&=[\cdots[[\varphi(X_{ii}),Y_{ij}]+[X_{ii},\varphi(Y_{ij})],P_{j}],\cdots,P_{j}]\\
&=[\cdots[d(X_{ii})Y_{ij}+X_{ii}d(Y_{ij}),P_{j}],\cdots,P_{j}]\\
&=d(X_{ii})Y_{ij}+X_{ii}d(Y_{ij}),
\end{aligned}\eqno(7)
$$
where $P_{j}=P$ if $j=1$, else $P_{j}=Q$. Similarly we have
$$d(X_{ij}Y_{jj})=d(X_{ij})Y_{jj}+X_{ij}d(Y_{jj}).\eqno(8)$$

From Eq.(7), it is clearly that
$$d(X_{ii}Y_{ii}M_{ij})=d(X_{ii}Y_{ii})M_{ij}+X_{ii}Y_{ii}d(M_{ij}).$$
On the other hand,
$$\begin{aligned}
d(X_{ii}Y_{ii}M_{ij})=&d(X_{ii})Y_{ii}M_{ij}+X_{ii}d(Y_{ii}M_{ij})\\
=&d(X_{ii})Y_{ii}M_{ij}+X_{ii}d(Y_{ii})M_{ij}+X_{ii}Y_{ii}d(M_{ij}).
\end{aligned}$$
Comparing with the two expressions, we obtain
$$(d(X_{ii}Y_{ii})-d(X_{ii})Y_{ii}-X_{ii}d(Y_{ii}))M_{ij}=0.$$
By Lemma \ref {xxsec2.1} (1), we have
$$D(X_{ii}Y_{ii})=D(X_{ii})Y_{ii}+X_{ii}D(Y_{ii}).  \eqno(9)$$

Noting that $d$ is additive, there exists a central element
$Z\in\mathcal{Z}_{\mathcal{A}}$ such that
$$\begin{aligned}
&d(X_{12}Y_{21})-d(Y_{21}X_{12})=d(X_{12}Y_{21}-Y_{21}X_{12})=d([X_{12},Y_{21}])\\
=&\varphi([X_{12},Y_{21}])+Z=\varphi([[\cdots[X_{12},Q],\cdots,Q],Y_{21}])+Z\\
=&[[\cdots[\varphi(X_{12}),Q],\cdots,Q],Y_{21}]+[[\cdots[X_{12},Q],\cdots,Q],\varphi(Y_{21})]+Z\\
=&[d(X_{12}),Y_{21}]+[X_{12},d(Y_{21})]+Z\\
=&d(X_{12})Y_{21}+X_{12}d(Y_{21})-Y_{21}d(X_{12})-d(Y_{21})X_{12}+Z.
\end{aligned}$$
Hence $d(X_{12}Y_{21})-d(X_{12})Y_{21}-X_{12}d(Y_{21})\in
P\mathcal{Z}_{\mathcal{A}}$ and
$d(Y_{21}X_{12})-Y_{21}d(X_{12})-d(Y_{21})X_{12}\in
Q\mathcal{Z}_{\mathcal{A}}$.

Let $X=\sum_{1\leq i,j\leq 2}X_{ij}$, $Y=\sum_{1\leq
i,j\leq2}Y_{ij}$, where $X_{ij},Y_{ij}\in\mathcal{A}_{ij}$. A direct
computation shows that
$$\begin{aligned}
d(XY)&=d(X_{11}Y_{11})+d(X_{11}Y_{12})+d(X_{12}Y_{21})+d(X_{12}Y_{22}) \\
&\quad +d(X_{21}Y_{11})+d(X_{21}Y_{12})+d(X_{22}Y_{21})+d(X_{22}Y_{22}) \\
&=d(X)Y+Xd(Y)+PZ_{1}+QZ_{2}
\end{aligned}
$$
where $Z_{1},Z_{2}\in \mathcal{Z}_{\mathcal{A}}$.
Define a map $\theta: \mathcal{A}\times\mathcal{A}\longrightarrow
P\mathcal{Z}_{\mathcal{A}}\oplus Q\mathcal{Z}_{\mathcal{A}}$ by
$$\theta(X,Y)=d(XY)-d(X)Y-Xd(Y).$$
It is clear that $\theta$ is a bi-additive mapping. So that for any
$X\in \mathcal{A}_{11}\oplus\mathcal{A}_{22}$ and $Y\in\mathcal{A}$,
there is $\theta(X,Y)=\theta(Y,X)=0$ by Eq.(7)-Eq.(9).

We only need to show that $\theta(X,Y)\equiv 0$. In fact, for any
$X,Y,Z\in\mathcal{A}$,
$$\begin{aligned}
d(XYZ)&=d((XY)Z)=d(XY)Z+XYd(Z)+\theta(XY,Z)\\
&=d(X)YZ+Xd(Y)Z+XYd(Z)+\theta(X,Y)Z+\theta(XY,Z).
\end{aligned}$$
On the other hand,
$$\begin{aligned}
d(XYZ)&=d(X(YZ))=d(X)YZ+Xd(YZ)+\theta(X,YZ)\\
&=d(X)YZ+Xd(Y)Z+XYd(Z)+X\theta(Y,Z)+\theta(X,YZ).
\end{aligned}$$
Hence $$\theta(X,Y)Z+\theta(XY,Z)=X\theta(Y,Z)+\theta(X,YZ).$$
Therefore $\theta$ is a Hochschild 2-cocycle. Taking $Z=Z_{ii}\in\mathcal{A}_{ii}$, we have
$\theta(X,Y)Z_{ii}=\theta(X,YZ_{ii})$. Assume
$\theta(X,Y)=PZ_{1}+QZ_{2}$ for some
$Z_{1},Z_{2}\in\mathcal{Z}_{\mathcal{A}}$. Then
$\theta(X,Y)Z_{11}=PZ_{1}Z_{11}=\theta(X,YZ_{11})\in
P\mathcal{Z}_{\mathcal{A}}\oplus Q\mathcal{Z}_{\mathcal{A}}$. Thus
$PZ_{1}Z_{11}\in
P\mathcal{Z}_{\mathcal{A}}=\mathcal{Z}_{\mathcal{A}_{11}}$. Since
$\mathcal{A}$ has no central abelian summands, we know
$\mathcal{A}_{11}$ has no central abelian summands too. Then Lemma
\ref{xxsec2.1} (4) shows that $PZ_{1}=0$. Similarly, we can prove
that $QZ_{2}=0$. Hence $\theta(X,Y)=0$.

\textbf{Step 3.} We show that
$f([\cdots[[X_{1},X_{2}],X_{3}],\cdots,X_{n}])=0$ for all
$X_{i}\in\mathcal{A}$.

In fact,
$$\begin{aligned}
&f([\cdots[[X_{1},X_{2}],X_{3}],\cdots,X_{n}])\\
=&\varphi([\cdots[[X_{1},X_{2}],X_{3}],\cdots,X_{n}])-d([\cdots[[X_{1},X_{2}],X_{3}],\cdots,X_{n}])\\
=&[\cdots[[\varphi(X_{1}),X_{2}],X_{3}],\cdots,X_{n}]+[\cdots[[X_{1},\varphi(X_{2})],X_{3}],\cdots,X_{n}]+\cdots\\
&+[\cdots[[X_{1},X_{2}],X_{3}],\cdots,\varphi(X_{n})]-d([\cdots[[X_{1},X_{2}],X_{3}],\cdots,X_{n}])\\
=&[\cdots[[d(X_{1}),X_{2}],X_{3}],\cdots,X_{n}]+[\cdots[[X_{1},d(X_{2})],X_{3}],\cdots,X_{n}]+\cdots\\
&+[\cdots[[X_{1},X_{2}],X_{3}],\cdots,d(X_{n})]-d([\cdots[[X_{1},X_{2}],X_{3}],\cdots,X_{n}])=0.
\end{aligned}$$
\qed

\vspace{6pt}
\noindent{\bf Acknowledgements.}
The the first author of this work is supported
by a research foundation of Huaqiao University (Grant No. 10BS323).
The second author is partially supported by the Tianyuan 
Mathematics Foundation of China (Grant No. 11126331).
The work of the third author is partially supported by the National
Nature Science Foundation of China (Grant No. 10871023).

\end{document}